\numberwithin{equation}{section}
\theoremstyle{plain}
\newtheorem{Th}{Theorem}[section]
\newtheorem{Lemma}[Th]{Lemma}
 \theoremstyle{definition}
\newtheorem{Def}[Th]{Definition}
\newtheorem{Rem}[Th]{Remark}
\newtheorem{?}[Th]{Problem}
\newcommand{\prm}{{\mathrm{pm}}}
\newcommand{\C}{\mathbb{C}}
\newcommand{\e}{\varepsilon}
\newcommand{\capp}{\mathrm{cap}}
\newcommand{\R}{\mathbb{R}}
\newcommand{\HH}{\mathbb{H}}
\begin{document}

\title[Stable polynomials, orientations and matchings]{Short survey on \\ stable polynomials, orientations and matchings}

\author[P. Csikv\'ari]{P\'{e}ter Csikv\'{a}ri}

\address{Alfr\'ed R\'enyi Institute of Mathematics, H-1053 Budapest Re\'altanoda utca 13/15 \and ELTE: E\"{o}tv\"{o}s Lor\'{a}nd University \\ Mathematics Institute, Department of Computer
Science \\ H-1117 Budapest
\\ P\'{a}zm\'{a}ny P\'{e}ter s\'{e}t\'{a}ny 1/C}

\email{peter.csikvari@gmail.com}

\author[\'A. Schweitzer]{\'Ad\'am Schweitzer}

\address{E\"{o}tv\"{o}s Lor\'{a}nd University \\ Mathematics Institute \\ H-1117 Budapest
\\ P\'{a}zm\'{a}ny P\'{e}ter s\'{e}t\'{a}ny 1/C \\ Hungary
}

\email{adamschweitzer1@gmail.com}

\thanks{The first author  is supported by the Counting in Sparse Graphs Lend\"ulet Research Group of the Alfr\'ed R\'enyi Institute of Mathematics. The second author is partially supported by the EFOP program (EFOP-3.6.3-VEKOP-16-2017-00002).}

 \subjclass[2010]{Primary: 05C30. Secondary: 05C31, 05C70}

 \keywords{stable polynomials, orientations, matchings} 

\begin{abstract} This is a short survey about the theory of stable polynomials and its applications. It gives  self-contained proofs of two theorems of Schrijver.
One of them asserts that for a $d$--regular bipartite graph $G$ on $2n$ vertices, the number of perfect matchings, denoted by $\prm(G)$,  satisfies
$$\prm(G)\geq \bigg( \frac{(d-1)^{d-1}}{d^{d-2}} \bigg)^{n}.$$
The other theorem claims that for even $d$ the number of Eulerian orientations of a $d$--regular graph $G$ on $n$ vertices, denoted by $\e(G)$, satisfies
$$\e(G)\geq \bigg(\frac{\binom{d}{d/2}}{2^{d/2}}\bigg)^n.$$
To prove these theorems we use the theory of stable polynomials, and give a common generalization of the two theorems.
\end{abstract}

\maketitle

\section{Introduction}

In this paper we will give new proofs for the following theorems of Schrijver. The first theorem is about perfect matchings of regular bipartite graphs.

\begin{Th}[Schrijver \cite{Sch2}] \label{matching}
Let $G=(A,B,E)$ be $d$--regular bipartite graph on $2n$ vertices. Let $\prm(G)$ denote the number of perfect matchings of $G$. Then
$$\prm(G)\geq \bigg( \frac{(d-1)^{d-1}}{d^{d-2}} \bigg)^{n}.$$
\end{Th}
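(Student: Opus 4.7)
The plan is to prove Theorem \ref{matching} via a capacity argument in the framework of real stable polynomials, in the spirit of Gurvits. First I associate to $G$ the polynomial
$$p_G(x_1,\ldots,x_n) = \prod_{i=1}^n \sum_{j=1}^n A_{ij}\, x_j,$$
where $A \in \{0,1\}^{n \times n}$ is the biadjacency matrix of $G$. Being a product of linear forms with nonnegative coefficients, $p_G$ is real stable. It is homogeneous of degree $n$, the degree of each variable $x_k$ in $p_G$ is exactly $d$, and the coefficient of $x_1 x_2 \cdots x_n$ equals $\operatorname{perm}(A) = \prm(G)$, which also equals $\partial_{x_1}\cdots\partial_{x_n} p_G$ evaluated at $0$.

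Next, I compute the capacity
$$\capp(p_G) := \inf_{x_1,\ldots,x_n > 0} \frac{p_G(x_1,\ldots,x_n)}{x_1 x_2\cdots x_n}.$$
Because $A/d$ is doubly stochastic, weighted AM--GM applied factor by factor gives $\sum_j (A_{ij}/d)\, x_j \geq \prod_j x_j^{A_{ij}/d}$, and multiplying over $i$ yields $p_G(x) \geq d^n\, x_1\cdots x_n$, with equality at $x_1=\cdots=x_n$. Hence $\capp(p_G) = d^n$.

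The heart of the argument is the following iterative reduction lemma: if $q(x_1,\ldots,x_m)$ is real stable with nonnegative coefficients, homogeneous of degree $m$, and $\deg_{x_m}(q) \leq d$, then $q' := \partial_{x_m} q \big|_{x_m=0}$ is real stable, homogeneous of degree $m-1$, and
$$\capp(q') \geq \left(\frac{d-1}{d}\right)^{d-1} \capp(q).$$
Stability of $q'$ follows from the standard closure of real stability under differentiation and specialization of a variable to a real value. The capacity estimate reduces, upon fixing positive $x_1,\ldots,x_{m-1}$ and examining the univariate slice $h(t) := q(x_1,\ldots,x_{m-1},t)$, to the sharp univariate inequality: \emph{for any polynomial $h(t) = c_0 + c_1 t + \cdots + c_d t^d$ with nonnegative coefficients and only real roots,}
$$c_1 \geq \left(\frac{d-1}{d}\right)^{d-1} \inf_{t>0} \frac{h(t)}{t},$$
\emph{with equality for $h(t) = (1+t)^d$.} Iterating the reduction $n$ times, which is legitimate because the degree bound $\deg_{x_k} \leq d$ is preserved at each step, turns $p_G$ into the constant $\prm(G)$ and gives
$$\prm(G) \geq \left(\frac{d-1}{d}\right)^{n(d-1)} \capp(p_G) = \left(\frac{d-1}{d}\right)^{n(d-1)} d^n = \left(\frac{(d-1)^{d-1}}{d^{d-2}}\right)^n.$$

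The main obstacle I expect is the sharp univariate inequality. Gurvits's proof of the Van der Waerden--Falikman bound $\operatorname{perm}(A) \geq n!/n^n$ rests on a weaker capacity estimate whose constant depends only on the number of variables and not on a per-variable degree; Schrijver's improvement requires exploiting the degree hypothesis on an individual variable, and proving the sharp constant $\left(\tfrac{d-1}{d}\right)^{d-1}$ demands a careful analysis of extremal real-rooted polynomials with nonnegative coefficients and degree $\leq d$. Once this univariate inequality is in place, the multivariate iteration is essentially a mechanical application of the closure properties of real stability.
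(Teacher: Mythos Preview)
Your argument is correct, but it is not the paper's proof: it is precisely Gurvits's original proof of Theorem~\ref{matching}, which the paper explicitly cites as prior work. You work with the $n$-variable polynomial $p_G(x)=\prod_{i}\sum_{j}A_{ij}x_j$ attached to the biadjacency matrix, compute $\capp(p_G)=d^n$ via AM--GM and double stochasticity of $A/d$, and then iterate the $r=1$ case of the univariate coefficient lemma, losing a factor $\big(\tfrac{d-1}{d}\big)^{d-1}$ at each of the $n$ steps.

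The paper instead uses the $2n$-variable edge polynomial $P_G(\underline{x})=\prod_{(u,v)\in E}(x_u+x_v)$, picks the exponent vector $\underline{\alpha}$ equal to $1$ on $A$ and $d-1$ on $B$, computes $\capp_{\underline{\alpha}}(P_G)=d^{nd}/(d-1)^{n(d-1)}$ by weighted AM--GM, and applies the general-$r$ coefficient lemma (Theorem~\ref{coefficient theorem}) at every vertex, with $r=1$ on $A$ and $r=d-1$ on $B$. The point of this detour is unification: the same polynomial $P_G$ and the same coefficient lemma, with $\underline{\alpha}=(d_v/2)_v$, give Schrijver's Eulerian-orientation bound (Theorem~\ref{orientation}) and the common generalization Theorem~\ref{general}. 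Your route is shorter and only needs the easier $r=1$ univariate inequality, but it does not extend to Eulerian orientations; the paper's route costs the full Lemma~\ref{main lemma2} for general $r$ but buys the unified statement. One small point you glossed over: you should note that the intermediate polynomials $q'$ are never identically zero, which follows since $\prm(G)>0$ for $d$-regular bipartite $G$ by Hall's theorem.
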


The next theorem is about Eulerian orientations of regular graphs. Recall that an orientation of a graph $G$ is Eulerian if the in-degree and out-degree are equal at each vertex. (In particular, the degree of a vertex must be even.)

\begin{Th}[Schrijver \cite{Sch1}] \label{orientation}
Let $G$ be a graph, where the degree of the vertex $v$ is $d_v$. Suppose that $d_v$ is even for each $v$. Let $\e(G)$ denote the number of Eulerian orientations of the graph $G$. Then
$$\e(G)\geq \prod_{v\in V(G)}\frac{\binom{d_v}{d_v/2}}{2^{d_v/2}}.$$
\end{Th}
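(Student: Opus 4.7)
The plan is to encode $\e(G)$ as one coefficient of a real stable polynomial, compute the associated capacity, and apply a Gurvits-type capacity-to-coefficient inequality that specialises to both of Schrijver's theorems.

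I associate to $G$ the edge polynomial
$$P_G(\mathbf{x})=\prod_{e=uv\in E(G)}(x_u+x_v)\in\R[x_v:v\in V(G)].$$
Being a product of linear forms with non-negative coefficients, $P_G$ is real stable. Expanding, each orientation $\sigma$ of $G$ (a choice of a ``head'' $\sigma(e)\in e$ per edge) contributes the monomial $\prod_v x_v^{d_v^+(\sigma)}$, where $d_v^+(\sigma)$ is the induced in-degree. Eulerian orientations are exactly those with $d_v^+(\sigma)=d_v/2$ at every $v$, so $\e(G)$ is the coefficient of $\prod_v x_v^{d_v/2}$ in $P_G(\mathbf{x})$. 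Writing $\alpha=(d_v/2)_{v\in V}$, the AM-GM inequality $x_u+x_v\ge 2\sqrt{x_u x_v}$ multiplied across the $|E|$ edges (each vertex $v$ appearing in $d_v$ of them) gives $P_G(\mathbf{x})\ge 2^{|E|}\prod_v x_v^{d_v/2}$, with equality at $\mathbf{x}=\mathbf{1}$; hence
$$\capp_\alpha(P_G)=\inf_{\mathbf{x}>\mathbf{0}}\frac{P_G(\mathbf{x})}{\prod_v x_v^{d_v/2}}=2^{|E|}=\prod_v 2^{d_v/2}.$$

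Then I invoke the unifying capacity bound: for any real stable polynomial $p(x_1,\ldots,x_n)$ with non-negative coefficients and any multi-index $\alpha$ with $\sum\alpha_i=\deg p$,
$$[\,x_1^{\alpha_1}\cdots x_n^{\alpha_n}\,]\,p\;\ge\;\capp_\alpha(p)\cdot\prod_{i=1}^{n}\frac{\binom{d_i}{\alpha_i}\,\alpha_i^{\alpha_i}(d_i-\alpha_i)^{d_i-\alpha_i}}{d_i^{d_i}},\qquad d_i:=\deg_{x_i}p.$$
The $i$-th factor equals the ratio between $[x^{\alpha_i}](1+x)^{d_i}$ and its own capacity, i.e.\ the worst univariate loss; the same inequality applied to $\prod_j(\sum_i A_{ij}x_i)$ recovers Schrijver's matching theorem. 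For $p=P_G$ with $d_v=\deg_G(v)$ and $\alpha_v=d_v/2$, the factor at~$v$ collapses to $\binom{d_v}{d_v/2}/2^{d_v}$, and multiplying by $\capp_\alpha(P_G)=\prod_v 2^{d_v/2}$ yields $\e(G)\ge \prod_v \binom{d_v}{d_v/2}/2^{d_v/2}$, as required.

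The hard part is the unifying capacity bound. Following Gurvits' strategy, I would induct on the number of variables by replacing $p$ with its ``descent''
$$q(x_1,\ldots,x_{n-1})=\frac{1}{\alpha_n!}\,\partial_{x_n}^{\alpha_n}p\,\big|_{x_n=0},$$
which remains real stable by the Borcea--Br\"and\'en preservation theorems. The delicate quantitative step is showing that $\capp(q)\ge \capp(p)\cdot \binom{d_n}{\alpha_n}\alpha_n^{\alpha_n}(d_n-\alpha_n)^{d_n-\alpha_n}/d_n^{d_n}$; by freezing the remaining variables this reduces via stability to a univariate extremal problem among real-rooted polynomials of degree $d_n$ with non-negative coefficients, whose minimiser of ``coefficient of $x^{\alpha_n}$ over capacity'' is $(1+x)^{d_n}$.
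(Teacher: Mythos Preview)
Your proposal is correct and follows essentially the same route as the paper: encode $\e(G)$ as the coefficient of $\prod_v x_v^{d_v/2}$ in the real stable polynomial $P_G(\mathbf{x})=\prod_{uv\in E}(x_u+x_v)$, compute $\capp_\alpha(P_G)=2^{|E|}$ by AM--GM, and apply the Gurvits-style coefficient bound (the paper's Theorem~2.7), itself proved by inducting on variables and reducing the descent step to the univariate extremal problem whose minimiser is the binomial-type polynomial. The only cosmetic differences are that the paper states the coefficient bound without your homogeneity hypothesis $\sum_i\alpha_i=\deg p$ (unneeded here anyway) and proves stability of the descent directly rather than citing Borcea--Br\"and\'en.
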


The main goal of this paper is to show that these two theorems have a common generalization. To spell out this generalization we will count the number of orientations in a graph with prescribed in-degree sequence.

\begin{Def}
Let $\underline{r}=(r_v)_{v\in V(G)}\in \mathbb{Z}^{V(G)}$. Let $\e_{\underline{r}}(G)$ denote the number of those orientations of the graph $G$, where the in-degree of the vertex $v$ is $r_v$.
\end{Def}

Observe that if we have a $d$--regular bipartite graph $G=(A,B,E)$, then the perfect matchings are in bijection with those orientations of the graph, where the in-degree of the vertices in $A$ is $1$, and is $d-1$ in case of the vertices of $B$. Indeed, simply orient each edge of a perfect matching towards $A$, and every other edge towards $B$. Clearly, if we have such an orientation, then the edges oriented towards $A$ form a perfect matching. 
\bigskip

The following theorem might look technical, but it easily implies both  Theorem~\ref{matching} and \ref{orientation}.

\begin{Th} \label{general}
Let $G=(V,E)$ be a graph with degree $d_v$ at vertex $v$, and let 
$\e_{\underline{r}}(G)$ denote the number of orientations of the graph $G$, where the in-degree of the vertex $v$ is $r_v$.  Then
$$\e_{\underline{r}}(G)\geq \prod_{v\in V(G)}\binom{d_v}{r_v}\bigg(\frac{r_v}{d_v}\bigg)^{r_v}\bigg(\frac{d_v-r_v}{d_v}\bigg)^{d_v-r_v}\cdot \inf_{x_u>0}\frac{\prod_{(u,v)\in E(G)}(x_u+x_v)}{\prod_{u\in V(G)}x_u^{r_u}}.$$
\end{Th}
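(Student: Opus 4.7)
My approach is to work with the generating polynomial $P(\mathbf{x}) = \prod_{(u,v) \in E(G)} (x_u + x_v)$ and to extract $[\prod_v x_v^{r_v}] P = \e_{\underline{r}}(G)$; the last equality is immediate because each monomial in the expansion records, for every edge, the endpoint that the orientation points into, and the in-degree at $v$ is the exponent of $x_v$. Each linear factor $x_u + x_v$ is real stable (its zero set $x_u = -x_v$ is incompatible with both coordinates lying in the open upper half-plane), so $P$ is real stable as a product of real stable polynomials. The plan is to lower bound $[\prod_v x_v^{r_v}] P$ by $\capp_{\underline{r}}(P) := \inf_{\mathbf{x} > 0} P(\mathbf{x}) / \prod_v x_v^{r_v}$ with exactly the loss factor stated in the theorem.

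I extract the target monomial one variable at a time, using the following single-variable Gurvits-type inequality: if $q(t) = \sum_{k=0}^d c_k t^k$ is real-rooted with $c_k \geq 0$, then
$$c_r \;\geq\; \binom{d}{r}\bigg(\frac{r}{d}\bigg)^r\bigg(\frac{d-r}{d}\bigg)^{d-r} \inf_{t > 0} \frac{q(t)}{t^r}.$$
Equality holds for $q(t) = (\alpha t + \beta)^d$, which pins down the constant; the general case follows by factoring $q(t) = c_d \prod_i (t + a_i)$ with $a_i \geq 0$ and a short optimization argument, and the constant is monotone non-increasing in $d$ for fixed $r$. Now enumerate $V(G) = \{v_1, \dots, v_n\}$ and define inductively $P^{(0)} = P$ and $P^{(j)}(x_{v_{j+1}}, \dots, x_{v_n}) := [x_{v_j}^{r_{v_j}}] P^{(j-1)}$. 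For any positive values of $x_{v_{j+1}}, \dots, x_{v_n}$, the polynomial $P^{(j-1)}$ is real-rooted in $x_{v_j}$ of degree at most $d_{v_j}$ with nonneg coefficients, so the inequality above applies and, after invoking the monotonicity of the constant to replace the actual degree by $d_{v_j}$, gives
$$P^{(j)} \;\geq\; \binom{d_{v_j}}{r_{v_j}}\bigg(\frac{r_{v_j}}{d_{v_j}}\bigg)^{r_{v_j}}\bigg(\frac{d_{v_j}-r_{v_j}}{d_{v_j}}\bigg)^{d_{v_j}-r_{v_j}} \inf_{x_{v_j} > 0} \frac{P^{(j-1)}}{x_{v_j}^{r_{v_j}}}.$$

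To iterate I need $P^{(j)}$ to remain real stable in the surviving variables. Since $[x^k](\cdot) = (k!)^{-1} \partial_x^k(\cdot)\big|_{x=0}$, this is a composition of differentiations and specialization at a real point, and both operations preserve real stability (the differentiation claim being the Lieb--Sokal lemma); in particular, for any positive reals substituted for the surviving variables, the resulting univariate polynomial in $x_{v_j}$ is real-rooted, justifying the previous step. Dividing the displayed inequality by $\prod_{k > j} x_{v_k}^{r_{v_k}}$ and taking the infimum over these variables yields the telescoping bound $\capp(P^{(j)}) \geq C_{v_j} \cdot \capp(P^{(j-1)})$, where $C_{v_j}$ denotes the binomial-Bernoulli factor above. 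After $n$ iterations $P^{(n)}$ is the constant $\e_{\underline{r}}(G)$, and chaining the inequalities collapses to $\e_{\underline{r}}(G) \geq \prod_v C_v \cdot \capp_{\underline{r}}(P)$, which is exactly the theorem. The main obstacle is establishing the single-variable inequality with the sharp constant; once that is in place, the multivariate argument is a matter of careful bookkeeping through the stability-preservation pipeline.
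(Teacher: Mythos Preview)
Your proposal is correct and follows essentially the same route as the paper: apply the coefficient lemma for real stable polynomials (the paper's Theorem~2.6) to $P_G(\underline{x})=\prod_{(u,v)\in E}(x_u+x_v)$, which in turn is proved by iterating the single-variable Gurvits inequality (the paper's Lemma~2.7) one variable at a time, using that differentiation and real specialization preserve stability. The only organizational difference is that the paper packages the iteration as a separate capacity-preservation theorem (Theorem~2.5) and devotes a full subsection to proving the sharp single-variable bound via an optimization over the simplex $D_{d,r}$, whereas you sketch both pieces inline; your remark on the monotonicity of the constant in $d$ is exactly what bridges the paper's ``degree equals $d$'' in Theorem~2.5 to ``degree at most $d_i$'' in Theorem~2.6.
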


We remark that the role of the multivariate polynomial $P_G(\underline{x}):=\prod_{(u,v)\in E(G)}(x_u+x_v)$ in the theorem comes from the fact that
$$\prod_{(u,v)\in E(G)}(x_u+x_v)=\sum_{\underline{r}}\e_{\underline{r}}(G)\prod_{u\in V(G)}x_u^{r_u}.$$
In other words, this theorem is about how to give a lower bound on a coefficient of a multivariate polynomial in terms of the polynomial. It turns out that this lower bound is possible, because the polynomial $P_G(\underline{x})$ is a real stable polynomial. The definition of real stability is the following.

\begin{Def}
A multivariate polynomial $P(x_1,\dots ,x_n)$ with complex coefficients is stable if $P(z_1,\dots ,z_n)\neq 0$ whenever $\mathrm{Im}(z_i)>0$ for $i=1,\dots ,n$. A polynomial is called real stable if it is stable and its coefficients are real.
\end{Def}

Note that a univariate polynomial with real coefficients is  stable if and only if it is real-rooted. So real stability is a generalization of real-rootedness for multivariate polynomials.

We remark that Gurvits \cite{Gur1} already gave a  proof of Theorem~\ref{matching} using real stable polynomials. He used the polynomial 
$$Q(\underline{x})=\prod_{v\in B}\bigg(\sum_{u\in A}x_u\bigg)$$
in his proof. In this case the coefficient of $\prod_{u\in A}x_u$ is exactly $\prm(G)$. In fact, we will follow exactly the strategy of Gurvits.  This strategy is based on two concepts, the real stability and the capacity of a polynomial. The latter was invented by Gurvits himself. We will review these concepts in the next section.
\bigskip

\noindent \textbf{What is new in this paper?} The proof 
of Theorem~\ref{coefficient theorem} is new, the theorem itself appeared in \cite{Gur3} in a slightly different form.
The use of the polynomial $P_G(\underline{x})$ in these proofs also seems to be new, although variants of this polynomial appeared in \cite{Gur3}, but never exactly this one. Proving Theorem~\ref{orientation} via stable polynomials is also new. On the other hand, the general strategy is not new at all. In fact, one of our main goals is to advertise this theory, so this paper can be considered as a mini survey.
\bigskip

\noindent \textbf{This paper is organized as follows.} In the next section we collect the basic facts about real stable polynomials and capacity. In Section 3 we prove Theorem~\ref{general} and derive Theorem~\ref{matching} and \ref{orientation} from it. In Section 4 we collected pointers to the literature.

\section{Real stability and capacity}

In this section we review the basic properties of real stability and capacity. To keep this paper  self-contained we will prove every result that we use apart from the Hermite-Sylvester criterion.

\subsection{Stability} Recall that a multivariate polynomial $P(x_1,\dots ,x_n)$ is real stable if it has real coefficients, and $P(z_1,\dots ,z_n)\neq 0$ whenever $\mathrm{Im}(z_i)>0$ for $i=1,\dots ,n$. We have seen that real stability is a generalization of univariate real-rooted polynomials. As the next lemma shows there is a more direct connection between the two concepts.

\begin{Lemma} \label{stable-real-rooted} A multivariate polynomial $P(z_1,\dots ,z_n)\in \R[z_1,\dots ,z_n]$ is stable if and only if for all $\underline{v}=(v_1,\dots, v_n)\in \R^n$ and $\underline{u}=(u_1,\dots ,u_n)\in \R^n_{>0}$, the univariate polynomial $g(t)=P(v_1+tu_1,\dots ,v_n+tu_n)$ is real-rooted.
\end{Lemma}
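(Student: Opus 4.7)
The plan is to prove both directions by direct unpacking of the definitions, using the fact that the line parametrisation $t\mapsto(v_j+tu_j)_{j=1}^n$ carries the open upper half plane $\mathrm{Im}(t)>0$ into the polydisk $\{\mathrm{Im}(z_j)>0\text{ for all }j\}$ precisely when every $u_j$ is positive. This matches the hypothesis on $\underline{u}$ exactly, so the equivalence essentially follows by pulling back zeros along this linear map.

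First I would handle the implication ``stable $\Rightarrow$ real-rooted on lines''. Fix $\underline{v}\in\R^n$ and $\underline{u}\in\R^n_{>0}$ and set $g(t)=P(v_1+tu_1,\dots,v_n+tu_n)$. Since $P$ has real coefficients and $v_j,u_j$ are real, $g(t)\in\R[t]$. Suppose towards contradiction that $g(t_0)=0$ for some non-real $t_0=a+bi$; by passing to $\overline{t_0}$ (also a root of $g$, as $g$ has real coefficients) I may assume $b>0$. Then each coordinate $v_j+t_0u_j$ has imaginary part $bu_j>0$, so $\underline{v}+t_0\underline{u}$ lies in the upper polydisk, contradicting the stability of $P$. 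Hence every root of $g$ is real.

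For the converse I would argue by contrapositive. Suppose $P$ is not stable, so there exist $z_1,\dots,z_n\in\C$ with $\mathrm{Im}(z_j)>0$ for all $j$ and $P(z_1,\dots,z_n)=0$. Write $z_j=v_j+iu_j$ with $v_j\in\R$ and $u_j>0$, and set $g(t)=P(v_1+tu_1,\dots,v_n+tu_n)$. Then $g(i)=P(z_1,\dots,z_n)=0$, so $g$ has $i$ as a non-real root, exhibiting a line restriction (indexed by $\underline{v}\in\R^n$, $\underline{u}\in\R^n_{>0}$) that is not real-rooted.

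I do not anticipate a genuine obstacle here; the only technicality is the treatment of the zero polynomial in the forward direction. If $g\equiv 0$ one simply adopts the standard convention that it is real-rooted (or observes that this already forces many coefficients of $P$ to vanish and handles that case separately). The ``main idea'', if any, is just the observation that positivity of the $u_j$ is exactly what is needed to translate ``positive imaginary parts in all coordinates'' into ``positive imaginary part of the parameter $t$''; this is what makes the forward and backward reductions mirror each other so cleanly.
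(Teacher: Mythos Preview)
Your proof is correct and follows essentially the same route as the paper's own argument: both directions are handled by contrapositive, using that $\mathrm{Im}(v_j+tu_j)=u_j\,\mathrm{Im}(t)$, and in the non-stable case one writes $z_j=v_j+iu_j$ and observes $g(i)=0$. The only addition you make is the remark about the zero polynomial, which the paper leaves implicit.
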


\begin{proof} Let $\mathbb{H}=\{z \ |\ \mathrm{Im}(z)>0\}$. First suppose that $g(t)$ is not real rooted, then it has a root $a+bi\in \C$, where $b\neq 0$. Since the coefficients of $g$ are real, $a\pm bi$ are both zeros of $g$. So we can assume that $b>0$. But then $\mathrm{Im}(v_j+u_j(a+bi))=bu_j>0$ shows that the numbers $z_j=v_j+u_j(a+bi) \in \HH$ for $j=1,\dots ,n$ and $P(z_1,\dots ,z_n)=0$, thus $P$ is not stable.

Next suppose that $P$ is not stable, it has some zero $(z_1,\dots ,z_n)\in \HH^n$. Let $z_j=a_j+b_ji$. Then $b_j>0$. Then the polynomial
$g(t)=P(a_1+tb_1,\dots ,a_n+tb_n)$ satisfies that $(a_1,\dots ,a_n)\in \R^n$ and $(b_1,\dots ,b_n)\in \R^n_{>0}$ and has a non-real zero, namely $i$.
\end{proof}

We will also need the following lemma. 

\begin{Lemma} \label{real-rooted-convergence}
Given a polynomial $P(x)$ and suppose that $(P_n(x))_n$ is a sequence of real-rooted  polynomials such that
$\lim_{n\to \infty}P_n(x)=P(x)$ coefficientwise. Then $P(x)$ is real-rooted.
\end{Lemma}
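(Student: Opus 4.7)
The plan is the standard complex-analytic argument via Hurwitz's (or Rouch\'e's) theorem. Assume $P\not\equiv 0$, since otherwise the statement is vacuous. First, coefficientwise convergence of polynomials with real coefficients forces $P\in\R[x]$, so the non-real zeros of $P$ come in complex conjugate pairs. Suppose for contradiction that $P$ has a non-real root; then we may fix a root $z_0=a+bi$ of $P$ with $b>0$, so that $z_0\in\HH$.

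\textbf{The Hurwitz step.} I would choose $r>0$ small enough that (i) the closed disk $\ovl{D}=\{z:|z-z_0|\leq r\}$ lies in the open upper half-plane (take $r<b$), and (ii) $z_0$ is the only zero of $P$ in $\ovl{D}$; this is possible since the zeros of a nonzero polynomial are isolated. Then $|P(z)|\geq \delta$ for some $\delta>0$ on the boundary circle $\partial D$. Coefficientwise convergence of polynomials of bounded degree implies uniform convergence on any compact set (see the obstacle below), so $\sup_{z\in\partial D}|P_n(z)-P(z)|<\delta$ for all sufficiently large $n$. Rouch\'e's theorem then ensures $P_n$ and $P$ have the same number of zeros in $D$, counted with multiplicity; since $P$ has at least one zero in $D$, so does $P_n$. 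But $\ovl{D}\cap\R=\emptyset$, contradicting the fact that $P_n$ is real-rooted.

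\textbf{Main obstacle.} The only delicate point is justifying uniform convergence on $\ovl{D}$ from coefficientwise convergence. If $\deg P_n\leq N$ for all $n$, this is immediate because $|P_n(z)-P(z)|\leq \sum_{k=0}^N |a_{n,k}-a_k|\cdot R^k$ on $\{|z|\leq R\}$, and each coefficient difference tends to zero. In the applications of this lemma in the paper the degrees are manifestly bounded (typically equal to $\deg P$), so this suffices; one should either state this as a hypothesis or note that the general case requires a mild extra assumption that the higher-order coefficients of $P_n$ decay sufficiently fast on compact sets.
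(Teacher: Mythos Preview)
Your argument via Rouch\'e/Hurwitz is correct, and you are right to flag the bounded-degree issue: coefficientwise convergence only yields uniform convergence on compacta when the degrees of the $P_n$ stay bounded. In the paper's single application of this lemma (part (b) of the stability-preserver theorem) the degree is fixed throughout the limiting process, so your hypothesis is satisfied there.

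The paper takes a different route: it invokes the Hermite--Sylvester criterion, which says that a degree-$d$ real polynomial is real-rooted if and only if the $d\times d$ matrix $(m_{i+j-2})_{i,j}$ of power sums of its roots is positive semidefinite. Since the $m_k$ are polynomial (via Newton--Girard) in the coefficients, they vary continuously, and positive semidefiniteness is a closed condition; hence the limit polynomial is real-rooted. This trades complex analysis for a linear-algebraic black box. Your approach is arguably more self-contained for a reader who knows basic complex analysis but not the Hermite--Sylvester criterion, while the paper's approach avoids any geometric picture and reduces everything to continuity of algebraic expressions. Note that the paper's argument, as stated, also tacitly assumes a common degree (the size of the Hermite--Sylvester matrix depends on $d$), so the bounded-degree caveat you raise applies equally to both proofs.
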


Lemma~\ref{real-rooted-convergence} is a simple consequence of the well-known Hermite-Sylvester criterion. For a short, simple proof see that paper \cite{Nat}.

\begin{Lemma}[Hermite-Sylvester criterion] \label{HS-criterion}
Let $P(x)$ be a non-constant polynomial of degree $d$ with real coefficients and $\lambda_1,\dots ,\lambda_d$ not necessarily distinct zeros. Let $m_k=\sum_{j=1}^d\lambda_j^k$. Then $P(x)$ is real-rooted if and only if the $d\times d$ matrix $(m_{i+j-2})_{i,j=1,\dots ,d}$ is positive semi-definite. 
\end{Lemma}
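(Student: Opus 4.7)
The plan is to recognize the Hankel matrix $H = (m_{i+j-2})_{i,j=1}^{d}$ as a sum of outer products of Vandermonde vectors, and then analyze the resulting quadratic form on the space of real polynomials of degree less than $d$. Setting $v_j := (1, \lambda_j, \lambda_j^2, \ldots, \lambda_j^{d-1})^{T} \in \C^{d}$ one checks directly that $H = \sum_{j=1}^{d} v_j v_j^{T}$ (an outer product, not a Hermitian conjugate, so the $v_j$ may have complex entries while $H$ itself stays real). Equivalently, if $c = (c_0, \ldots, c_{d-1}) \in \R^{d}$ corresponds to the real polynomial $q(x) = \sum_{i} c_i x^{i}$, the single identity on which the whole argument rests is
$$c^{T} H c \;=\; \sum_{j=1}^{d} q(\lambda_j)^{2}.$$

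The easy direction is immediate: if all $\lambda_j$ are real then the right-hand side is a sum of squares of real numbers, hence non-negative for every $c$, so $H \succeq 0$. For the converse I would argue the contrapositive: assuming $P$ has a non-real root $\alpha + \beta i$ with $\beta > 0$ (whose conjugate $\alpha - \beta i$ is then also a root), I would construct a real polynomial $q$ of degree less than $d$ that vanishes at every root of $P$ other than $\alpha \pm \beta i$ and for which $q(\alpha + \beta i)$ is a nonzero purely imaginary value $wi$. Then $q(\alpha - \beta i) = \overline{q(\alpha + \beta i)} = -wi$, so the pair contributes $(wi)^{2} + (-wi)^{2} = -2w^{2}$ to $\sum_j q(\lambda_j)^{2}$ (higher-multiplicity copies only magnify this), while every other root contributes $0$. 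Hence $c^{T} H c = -2w^{2} < 0$ for this choice of $c$, contradicting $H \succeq 0$.

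The construction of $q$ is the main obstacle. Let $\pi$ be a real polynomial of degree at most $d-2$ that vanishes at every root of $P$ except $\alpha \pm \beta i$ (one concrete choice is $\pi(x) := P(x)/\bigl((x-\alpha)^{2} + \beta^{2}\bigr)^{k}$, where $k$ is the multiplicity of $\alpha + \beta i$ in $P$; by construction $\pi(\alpha + \beta i) \neq 0$). Write $\pi(\alpha + \beta i) = A + Bi$ with $(A,B) \neq (0,0)$. If $A = 0$, simply take $q := \pi$, which already takes the purely imaginary value $Bi$ at $\alpha + \beta i$. If $A \neq 0$, set $q(x) := (x - t)\,\pi(x)$ with $t := \alpha - B\beta/A \in \R$; a short computation from $q(\alpha + \beta i) = (A + Bi)\bigl((\alpha - t) + \beta i\bigr)$ shows that the real part vanishes by the choice of $t$ and the imaginary part equals $\beta(A^{2} + B^{2})/A \neq 0$, as required. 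This produces the desired $q$ and completes the argument; everything beyond this construction is bookkeeping.
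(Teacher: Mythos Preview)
The paper does not actually prove this lemma: it is the one statement explicitly excluded from the paper's self-containedness claim, with a reference to \cite{Nat} for a proof. So there is nothing in the paper to compare against.

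Your argument is correct. The identity $c^{T}Hc=\sum_{j}q(\lambda_j)^{2}$ for $q(x)=\sum_{i}c_ix^{i}$ is the right starting point and immediately yields the forward direction. For the converse, your construction of a real polynomial $q$ of degree at most $d-1$ that vanishes at every root of $P$ except the chosen conjugate pair $\alpha\pm\beta i$ and takes a nonzero purely imaginary value there is clean and handles all cases, including higher multiplicities and the degenerate case where $P$ has only the single conjugate pair as roots (then $\pi$ is a nonzero real constant, $A\neq 0$, $B=0$, and $q(x)=(x-\alpha)\pi$ works). One small piece of bookkeeping you use implicitly but do not state: in the $A\neq 0$ case, $\deg q=\deg\pi+1=(d-2k)+1\le d-1$, so the corresponding coefficient vector $c$ indeed lies in $\R^{d}$.
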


Note that $m_k$ can be computed from the coefficients by the Newton-Girard identities, so they are continuous functions of the coefficients. Hence the  Hermite-Sylvester criterion implies Lemma~\ref{real-rooted-convergence}.

Next we collect some operations that preserve stability. A general theory of stability preserver operations is developed by Borcea and Br\"and\'en \cite{BoBr}. We also recommend the paper of Choe, Oxley, Sokal and Wagner \cite{COSW} for a comprehensive list of operations that preserves stability. 

\begin{Th}
Let $P(x_1,\dots ,x_n)$ be a real stable polynomial. Suppose that the degree of $x_1$ in $P$ is $d$. Then the following hold true.\\
(a) The polynomial $x_1^dP(-1/x_1,x_2,\dots x_n)$ is real stable.\\
(b) If $a\in \mathbb{R}$, then $P(a,x_2,\dots ,x_n)$ is real stable or the constant $0$ polynomial.\\
(c) The polynomial $\frac{\partial}{\partial x_1}P$ is real stable or the constant $0$ polynomial.\\
\end{Th}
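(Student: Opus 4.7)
The plan is to handle the three parts separately, each reducing to a distinct classical fact about complex polynomials.

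For part (a), I would start from the observation that the Möbius map $z \mapsto -1/z$ preserves the open upper half-plane $\HH$: for $\mathrm{Im}(z_1) > 0$ one computes $\mathrm{Im}(-1/z_1) = \mathrm{Im}(z_1)/|z_1|^2 > 0$. Writing $P = \sum_{k=0}^d x_1^k a_k(x_2, \dots, x_n)$ with $a_k \in \R[x_2, \dots, x_n]$, one sees that
$$Q(x_1, \dots, x_n) := x_1^d P(-1/x_1, x_2, \dots, x_n) = \sum_{k=0}^d (-1)^k a_k(x_2, \dots, x_n)\, x_1^{d-k}$$
is a genuine real polynomial. For any $(z_1, \dots, z_n) \in \HH^n$ we have $z_1 \neq 0$ and $(-1/z_1, z_2, \dots, z_n) \in \HH^n$, so $Q(z_1, \dots, z_n) = z_1^d P(-1/z_1, z_2, \dots, z_n)$ is nonzero by stability of $P$.

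For part (b), I would combine the two lemmas already proved. Fix arbitrary $(v_2, \dots, v_n) \in \R^{n-1}$ and $(u_2, \dots, u_n) \in \R^{n-1}_{>0}$, and for $\epsilon > 0$ define
$$g_\epsilon(t) := P(a + \epsilon t,\, v_2 + t u_2,\, \dots,\, v_n + t u_n).$$
Taking $v_1 = a$ and $u_1 = \epsilon > 0$ in Lemma~\ref{stable-real-rooted}, $g_\epsilon$ is real-rooted. Its coefficients converge as $\epsilon \to 0^+$ to those of $g_0(t) := P(a, v_2 + t u_2, \dots, v_n + t u_n)$, so Lemma~\ref{real-rooted-convergence} forces $g_0$ to be real-rooted. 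Since $(v_2, \dots, v_n)$ and $(u_2, \dots, u_n)$ were arbitrary, a second application of Lemma~\ref{stable-real-rooted}---now to the $(n-1)$-variable polynomial $P(a, x_2, \dots, x_n)$---concludes that this polynomial is stable, unless it happens to vanish identically.

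Part (c) is the step that genuinely needs an external tool, the Gauss--Lucas theorem. I would argue by contradiction: assume $\partial P / \partial x_1$ is not identically zero yet vanishes at some $(z_1, \dots, z_n) \in \HH^n$. The univariate restriction $f(w) := P(w, z_2, \dots, z_n)$ cannot itself be the zero polynomial---otherwise evaluating at any $w_0 \in \HH$ would contradict the stability of $P$---and it has no roots in $\HH$, again by stability. All roots of $f$ therefore lie in the closed lower half-plane, which is convex. Gauss--Lucas then places every root of $f'$ in that same half-plane, contradicting $f'(z_1) = (\partial P / \partial x_1)(z_1, \dots, z_n) = 0$ together with $z_1 \in \HH$. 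The main obstacle, if one wants the argument to remain self-contained, is this part: it rests squarely on the Gauss--Lucas theorem, a classical result that is not developed elsewhere in the excerpt. Parts (a) and (b) are, respectively, a transformation computation and a limit passage through lemmas already in hand.
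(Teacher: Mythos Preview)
Your treatments of parts (a) and (b) match the paper's proof essentially line for line: the paper also observes that $z\mapsto -1/z$ preserves $\HH$ for (a), and for (b) it runs exactly your $\epsilon\to 0$ limit through Lemma~\ref{stable-real-rooted} and Lemma~\ref{real-rooted-convergence}.

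For part (c) your strategy is the same in spirit---the paper also reduces to the Gauss--Lucas phenomenon, reproving it inline via the logarithmic derivative $g'/g=\sum_i 1/(x-\rho_i)$---but your argument has a small gap. You show $f(w)=P(w,z_2,\dots,z_n)$ is not identically zero, and then invoke Gauss--Lucas. However, you have not excluded the possibility that $f$ is a nonzero \emph{constant}: in that case $f'\equiv 0$, so $f'(z_1)=0$ is automatic and Gauss--Lucas yields no contradiction (the convex hull of an empty set of roots says nothing). What you need is that the leading coefficient $P_d(x_2,\dots,x_n)$ of $P$ in $x_1$ does not vanish at $(z_2,\dots,z_n)\in\HH^{n-1}$, so that $f$ genuinely has degree $d\geq 1$. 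The paper secures exactly this by applying part (a): from $x_1^dP(-1/x_1,x_2,\dots,x_n)$ being stable and then substituting $x_1=0$ via part (b), one gets that $(-1)^dP_d(x_2,\dots,x_n)$, hence $P_d$, is itself real stable and therefore nonvanishing on $\HH^{n-1}$. Adding this one observation closes your argument completely.
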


\begin{proof}
As before let $\mathbb{H}=\{z \ |\ \mathrm{Im}(z)>0\}$. Then the first claim is trivial since $z\mapsto \frac{-1}{z}$ maps $\mathbb{H}$ to $\mathbb{H}$.
\medskip

Next we prove part (b). By Lemma~\ref{stable-real-rooted} the polynomial $P(a+\varepsilon t,v_2+tu_2,\dots ,v_n+tu_n)$ is real-rooted for every $\varepsilon,u_2,\dots u_n\in \R_{>0}$ and $a,v_2,\dots ,v_n\in \R$. Let $\varepsilon \to 0$, then by Lemma~\ref{real-rooted-convergence} we get that $P(a,v_2+tu_2,\dots ,v_n+tu_n)$ is real-rooted or the constant $0$ function for every $u_2,\dots ,u_n\in \R_{>0}$ and $v_2,\dots ,v_n$. Now using the  other direction of Lemma~\ref{stable-real-rooted} we get that $P(a,x_2,\dots ,x_n)$ is a real stable polynomial.

Next we prove part (c). Let 
$$P(x_1,\dots ,x_n)=\sum_{k=0}^dP_k(x_2,\dots ,x_n)x_1^k.$$
First we show that $P_d(x_2,\dots ,x_n)$ is a real stable polynomial. By part (a) 
$$R(x_1,\dots ,x_n):=x_1^dP(-1/x_1,x_2,\dots x_n)=\sum_{k=0}^dP_k(x_2,\dots ,x_n)(-1)^kx_1^{d-k}$$
is real stable. Then $R(0,x_2,\dots ,x_n)=(-1)^dP_d(x_2,\dots ,x_n)$ is real stable, and so $P_d(x_2,\dots ,x_n)$ is real stable.

Now let $Q=\frac{\partial}{\partial x_1}P$, and let $\underline{a}=(a_1,a_2,\dots, a_n)\in \mathbb{H}^n$. We show that if $Q\not\equiv 0$, then $\mathrm{Im}\left(\frac{Q(\underline{a})}{P(\underline{a})}\right)<0$. Note that here we use that $P$ is stable, so $P(\underline{a})\neq 0$, and we can divide with it. If $d=0$ then $Q\equiv 0$. We can assume that $d\geq 1$. Let 
$$g(x):=P(x,a_2,\dots ,a_n)=\sum_{k=0}^dP_k(a_2,\dots ,a_n)x^k.$$
Note that $P_d(a_2,\dots ,a_n)\neq 0$ since $P_d$ is real-stable. So $g(x)$ has degree $d\geq 1$. Then
$g(x)=c\prod_{i=1}^d(x-\rho_i)$, and we have
$$\frac{g'(x)}{g(x)}=\sum_{i=1}^d\frac{1}{x-\rho_i}.$$
Note that $\mathrm{Im}(\rho_i)\leq 0$, otherwise $P(\rho_i,a_2,\dots ,a_n)=0$ would yield a zero in $\mathbb{H}^n$. Hence 
$$\mathrm{Im}\left(\frac{Q(\underline{a})}{P(\underline{a})}\right)=\mathrm{Im} \left(\frac{g'(a_1)}{g(a_1)}\right)=\mathrm{Im} \left(
\sum_{i=1}^d\frac{1}{a_1-\rho_i}\right)<0.$$
In particular, this shows that $Q(\underline{a})\neq 0$. Hence $Q$ is stable. (Remark: we essentially repeated the proof of Gauss--Lucas theorem that asserts that the zeros of the derivative of a polynomial lie in the convex hull of the zeros of the polynomial.)
\medskip

\end{proof}

\subsection{Capacity of a polynomial} In this section we introduce the concept capacity.

\begin{Def}[Gurvits \cite{Gur1}]
Let $P(x_1,\dots ,x_n)$ be a multivariate polynomial with non-negative coefficients. Let $\underline{\alpha}=(\alpha_1,\dots ,\alpha_n)$ be a non-negative vector. Then the $\underline{\alpha}$-capacity of the polynomial $P(x_1,\dots ,x_n)$ is 
$$\mathrm{cap}_{\underline{\alpha}}(P)=\inf_{x_1,\dots ,x_n>0}\frac{P(x_1,\dots ,x_n)}{\prod_{i=1}^nx_i^{\alpha_i}}.$$ 
\end{Def}

Note that if the numbers $\alpha_i$ are integers, then the capacity is an upper bound for the coefficient of the term $\prod_{i=1}^nx_i^{\alpha_i}$.

So in Theorem~\ref{general} we had $\mathrm{cap}_{\underline{r}}(P_G)$ in the statement.
In the definition of capacity we never used that $P$ is real stable, but it turns out that this concept is especially useful when we study stable polynomials. The main reason for this phenomenon is that one can often govern the capacity for stability preserver operators. A general theory of capacity preserver linear operators was developed in the  paper of Leake and Gurvits \cite{LeGu}.   The following theorem is a special case of their theory.

\begin{Th} \label{capacity preserving}
Let $P(x_1,\dots ,x_n)$ be a real stable polynomial with non-negative coefficients. Suppose that the degree of $x_1$ in $P$ is $d$. Let
$$Q=\frac{1}{r!}\bigg( \frac{\partial^r}{\partial x_1^r}P\bigg)\bigg|_{x_1=0}.$$
In other words, if we expand $P$ as a polynomial of $x_1$, then $Q$ is the coefficient of $x_1^r$. \\
Let $\underline{\alpha}=(\alpha_1,\dots ,\alpha_n)$, where $\alpha_1=r$, and $\underline{\alpha}'=(\alpha_2,\dots ,\alpha_n)$. Then
$$\mathrm{cap}_{\underline{\alpha}'}(Q)\geq \binom{d}{r}\bigg(\frac{r}{d}\bigg)^{r}\bigg(\frac{d-r}{d}\bigg)^{d-r}\mathrm{cap}_{\underline{\alpha}}(P).$$
\end{Th}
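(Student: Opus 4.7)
The plan is to reduce the multivariate inequality to a univariate coefficient inequality for real-rooted polynomials, and then treat the latter as a constrained optimisation problem. First I would fix arbitrary positive reals $x_2,\dots,x_n$ and consider the univariate polynomial $p(z):=P(z,x_2,\dots,x_n)$. Iterating part (b) of the preceding stability preservers theorem (substitute $x_2,\dots,x_n$ as real constants one at a time) shows that $p$ is either identically zero or real stable, i.e.\ real-rooted since it is univariate. The coefficients of $p$ are polynomial expressions in $x_2,\dots,x_n$ with non-negative coefficients evaluated at positive numbers, hence non-negative; in particular every root of $p$ lies in $(-\infty,0]$. By definition the coefficient of $z^r$ in $p$ equals $Q(x_2,\dots,x_n)$.

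I would then invoke the following univariate coefficient inequality, which I claim does the job: if $q(z)=\sum_{k=0}^d a_k z^k$ has non-negative coefficients and only real roots, then
\[
a_r \;\geq\; \binom{d}{r}\Bigl(\tfrac{r}{d}\Bigr)^r\Bigl(\tfrac{d-r}{d}\Bigr)^{d-r}\,\inf_{t>0}\frac{q(t)}{t^r}.
\]
Applied to $p$, this gives $Q(x_2,\dots,x_n) \geq \binom{d}{r}(r/d)^r((d-r)/d)^{d-r}\inf_{t>0}P(t,x_2,\dots,x_n)/t^r$; dividing by $\prod_{i\geq 2}x_i^{\alpha_i}$ and then taking the infimum over positive tuples $(x_2,\dots,x_n)$ yields the desired comparison between the two capacities.

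To prove the univariate inequality I would factor $q(z)=a_d\prod_i(z+\lambda_i)$ with $\lambda_i\geq 0$, use the scaling invariance $z\mapsto cz$ and $q\mapsto cq$ to normalise so that $\inf_{t>0}q(t)/t^r$ is attained at $t^*=1$ with value $1$, and set $\mu_i:=\lambda_i/(1+\lambda_i)\in[0,1)$. The critical-point condition $q'(1)/q(1)=r$ then reads $\sum_i\mu_i=d-r$, and a short expansion yields
\[
a_r \;=\; \sum_{|S|=d-r}\prod_{i\in S}\mu_i\prod_{j\notin S}(1-\mu_j).
\]
The whole matter collapses to the combinatorial assertion that, for $\mu_i\in[0,1]$ with $\sum_i\mu_i=d-r$, this symmetric sum is at least $\binom{d}{r}(r/d)^r((d-r)/d)^{d-r}$, with equality at $\mu_i\equiv(d-r)/d$. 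I expect this to be the main obstacle; I would attack it by Lagrange multipliers (identifying the symmetric point as the unique interior critical point with multiplier $0$), combined with induction on $d$ to control the boundary faces where some $\mu_i$ hits $0$ or $1$. The delicate part is verifying that the symmetric interior critical point is a genuine minimum and not a saddle; this can be deduced from Schur-convexity of the objective restricted to the constraint set, which in turn follows from the log-concavity of the underlying Poisson-binomial point masses.
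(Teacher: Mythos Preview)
Your reduction to the univariate coefficient inequality is exactly the paper's argument, and your normalisation $q(1)=1$, $q'(1)=r$ together with the change of variable $\mu_i=\lambda_i/(1+\lambda_i)$ recovers the paper's setup for the key optimisation lemma (with $\alpha_i=1-\mu_i$, so $\sum_i\alpha_i=r$). The induction on $d$ for the boundary faces is also what the paper does.

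The gap is your treatment of interior points. The objective is \emph{not} Schur-convex on the constraint simplex, so that route fails outright. For instance, with $d=3$ and $d-r=1$ (so $\sum_i\mu_i=1$), one computes
\[
f(0.9,\,0.1,\,0)=0.82\ <\ 0.82175=f(0.9,\,0.05,\,0.05),
\]
yet $(0.9,0.1,0)$ majorises $(0.9,0.05,0.05)$. Your suggested justification via log-concavity of the Poisson--binomial point masses is also off target: log-concavity in the index $k$ says $b_{k-1}b_{k+1}\le b_k^2$, whereas what governs the two-variable slice is the sign of the ordinary second difference $b_{r-2}-2b_{r-1}+b_r$, which can take either sign even for log-concave sequences.

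The paper closes this gap by a two-variable exchange rather than Schur-convexity: fixing all coordinates but two with $x_i+x_j=u$, the objective along that line is the quadratic $x_i(u-x_i)(b_{r-2}-2b_{r-1}+b_r)+\text{const}$. If the leading coefficient is negative, replacing $(x_i,x_j)$ by $(u/2,u/2)$ strictly decreases the value; if it is non-negative, pushing one coordinate to $0$ or $1$ produces a boundary point of no larger value. Combined with the boundary induction and compactness, this forces the minimum to sit at the symmetric point. Your Lagrange-multiplier observation (multiplier $0$ at the symmetric point) is correct but insufficient on its own: it neither rules out other interior critical points nor distinguishes a minimum from a saddle.

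One minor additional point: your normalisation presumes that $\inf_{t>0}q(t)/t^r$ is attained at an interior $t^*$. The paper disposes of the edge cases (where $r$ coincides with, or lies outside, the extreme exponents present in $q$) separately before normalising.
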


An immediate corollary of Theorem~\ref{capacity preserving} is the following theorem.

\begin{Th}[Coefficient lemma for  stable polynomials with non-negative coefficients] \label{coefficient theorem}
Let $P(x_1,\dots ,x_n)$ be a real stable polynomial with non-negative coefficients. Suppose that the degree of $x_i$ in $P$ is at most $d_i$ for $i=1,\dots ,n$. Let $\underline{r}=(r_1,\dots ,r_n)$ and $a_{\underline{r}}$ be the coefficient of $\prod_{i=1}^nx_i^{r_i}$ in $P$. Then
$$a_{\underline{r}}\geq \prod_{i=1}^n\binom{d_i}{r_i}\bigg(\frac{r_i}{d_i}\bigg)^{r_i}\bigg(\frac{d_i-r_i}{d_i}\bigg)^{d_i-r_i}\mathrm{cap}_{\underline{r}}(P).$$
\end{Th}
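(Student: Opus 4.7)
The plan is to iterate Theorem~\ref{capacity preserving} once for each variable, peeling off one factor of the product at a time. Write
$$P(x_1,\dots,x_n)=\sum_{k=0}^{d_1}P_k(x_2,\dots,x_n)x_1^k,$$
and set $Q_1=P_{r_1}=\frac{1}{r_1!}\left.\frac{\partial^{r_1}}{\partial x_1^{r_1}}P\right|_{x_1=0}$. Applying Theorem~\ref{capacity preserving} with the role of $x_1$ as indicated, and with $\underline{\alpha}=(r_1,r_2,\dots,r_n)$, $\underline{\alpha}'=(r_2,\dots,r_n)$, I obtain
$$\mathrm{cap}_{(r_2,\dots,r_n)}(Q_1)\;\geq\;\binom{d_1}{r_1}\left(\frac{r_1}{d_1}\right)^{r_1}\left(\frac{d_1-r_1}{d_1}\right)^{d_1-r_1}\mathrm{cap}_{\underline{r}}(P).$$

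Next I need $Q_1$ to once again be a real stable polynomial with non-negative coefficients so that the process can be repeated. Non-negativity is automatic, since the coefficients of $Q_1$ are a subset of those of $P$. Real stability of $Q_1$ follows by iterating parts (c) and (b) of the stability preservation theorem: differentiating $P$ with respect to $x_1$ exactly $r_1$ times yields a real stable polynomial (or zero), and then substituting $x_1=0$ preserves real stability (or yields zero). In either degenerate case the conclusion of the theorem is trivial, because then $a_{\underline{r}}=0$ forces $\mathrm{cap}_{\underline{r}}(P)=0$ (the only way the capacity can fail to vanish after such an extraction is if the coefficient extracted is itself nonzero). The degree of each remaining $x_i$ in $Q_1$ is still at most $d_i$.

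Now induct: apply the same operation to $Q_1$ in the variable $x_2$, producing $Q_2$, and so on, picking up at each step the factor
$$\binom{d_i}{r_i}\left(\frac{r_i}{d_i}\right)^{r_i}\left(\frac{d_i-r_i}{d_i}\right)^{d_i-r_i}.$$
After $n$ such steps, $Q_n$ is a constant polynomial, namely the coefficient $a_{\underline{r}}$ itself, and its (empty) capacity is just its value. Multiplying the $n$ inequalities yields the desired bound.

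The main content is entirely in Theorem~\ref{capacity preserving}; the only point of care in the induction is verifying that real stability survives each $\partial/\partial x_i$ followed by $x_i=0$, and handling the edge cases where some intermediate polynomial becomes identically zero (in which case both sides of the inequality are zero and the statement holds trivially).
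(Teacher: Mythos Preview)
Your proposal is correct and is exactly the intended argument: the paper simply declares the theorem ``an immediate corollary of Theorem~\ref{capacity preserving}'', and you have spelled out the obvious iteration, peeling off one variable at a time and multiplying the resulting factors.

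One small comment: your separate treatment of the degenerate case (some intermediate $Q_i\equiv 0$) is unnecessary and slightly circular as phrased. Theorem~\ref{capacity preserving} already holds when $Q\equiv 0$, since its proof rests on Lemma~\ref{main lemma}, and that lemma covers the case $a_r=0$ (a real-rooted polynomial with non-negative coefficients has no internal zero coefficients, so $a_r=0$ forces $r$ outside the support and hence $\inf_{t>0}p(t)/t^r=0$). So the chain of inequalities remains valid without special-casing.
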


If we apply Theorem~\ref{coefficient theorem} to the real stable polynomial  $P_G(\underline{x})=\prod_{(u,v)\in E(G)}(x_u+x_v)$ we get Theorem~\ref{orientation}.

The key lemma to prove the above theorems is the following.

\begin{Lemma} \label{main lemma}
Let $p(z)=\sum_{k=0}^da_kz^k$ be a real-rooted polynomial with non-negative coefficients. Then
$$a_r\geq \binom{d}{r}\bigg(\frac{r}{d}\bigg)^r\bigg(\frac{d-r}{d}\bigg)^{d-r}\inf_{t>0}\frac{p(t)}{t^r}.$$
\end{Lemma}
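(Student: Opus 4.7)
The plan is to reduce the inequality to a statement about Poisson-binomial distributions and then prove that statement by induction on the degree $d$.

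Since $p$ is real-rooted with non-negative coefficients, all its roots are non-positive, so $p(z)=a_d\prod_{i=1}^d(z+\lambda_i)$ with $\lambda_i\ge 0$. In the non-trivial range $0<r<d$, the function $p(t)/t^r$ tends to $+\infty$ as $t\to 0^+$ and as $t\to\infty$, so its infimum is attained at some $t^*>0$. Setting the logarithmic derivative of $p(t)/t^r$ to zero at $t^*$ and introducing $s_i:=t^*/(t^*+\lambda_i)\in(0,1]$ turns the optimality condition into $\sum_i s_i = r$; a short algebraic rewriting then gives
\[
\frac{a_r}{\inf_{t>0}p(t)/t^r}\;=\;\sum_{T\subseteq[d],\,|T|=r}\prod_{i\in T}s_i\prod_{i\notin T}(1-s_i)\;=\;P\Bigl(\sum_{i=1}^d X_i = r\Bigr),
\]
where $X_1,\ldots,X_d$ are independent Bernoulli random variables with $P(X_i=1)=s_i$.

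Thus the lemma reduces to the Poisson-binomial inequality $P(\sum_i X_i = r)\ge C_{d,r}:=\binom{d}{r}(r/d)^r((d-r)/d)^{d-r}$ subject to $\sum s_i=r$ and $s_i\in[0,1]$. I would establish this by induction on $d$. The case $d=1$ is immediate. For the induction step, note that $P(\sum X_i=r)$ is a polynomial in $(s_1,\ldots,s_d)$, so on the compact constraint set its minimum is attained either on the boundary or at an interior critical point. If some $s_j\in\{0,1\}$, deleting that coordinate reduces the problem to $d-1$ Bernoullis with target sum $r$ or $r-1$; the inductive hypothesis together with the elementary monotonicities $C_{d-1,r}\ge C_{d,r}$ and $C_{d-1,r-1}\ge C_{d,r}$ (both equivalent to the fact that the binomial probability at its mean decreases as the number of trials grows) closes this branch. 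At an interior critical point, Lagrange multipliers force $\partial P/\partial s_i = \partial P/\partial s_j$ for every pair $i,j$, and a direct calculation factors the difference as
\[
\frac{\partial P}{\partial s_i}-\frac{\partial P}{\partial s_j}\;=\;(s_j-s_i)\bigl(P(Y=r-2)-2P(Y=r-1)+P(Y=r)\bigr),
\]
where $Y$ is the Poisson-binomial on the $d-2$ remaining coordinates. When the second factor is non-zero this forces $s_i=s_j$ for all pairs, hence $s_i=r/d$ throughout, and one computes directly that the inequality becomes an equality.

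The main obstacle is handling the degenerate locus where the second factor above vanishes, allowing a critical point with $s_i\neq s_j$. Indeed $P(\sum X_i = r)$ is not globally Schur-convex on the constraint simplex -- small perturbations of a non-symmetric configuration can either increase or decrease the value -- so there is no purely local reason to expect the symmetric point to be the global minimum. The degenerate locus is a proper algebraic subvariety of the interior, however, so a perturbation argument based on continuity of $P(\sum X_i = r)$ in the $s_i$ reduces this case to the generic one, completing the induction and hence the lemma.
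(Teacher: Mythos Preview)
Your reduction and induction scheme are exactly the paper's: the paper normalizes at the optimal $t$, writes $p$ in the form $\prod_i(1-\alpha_i+\alpha_i z)$ with $\sum_i\alpha_i=r$ (these are your $s_i$), and proves the same Poisson--binomial inequality by the same induction, with the same boundary-to-$(d-1)$ reduction and the same second-difference factor $b_{r-2}-2b_{r-1}+b_r$. The only substantive divergence is in how the interior is handled, and that is where your argument has a real gap.

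The perturbation claim in your last paragraph does not work. Knowing that the degenerate locus is a proper subvariety says nothing about the \emph{value} of $P(\sum X_i=r)$ on it: a continuous function on a compact set can perfectly well take its minimum on a measure-zero set, and perturbing the minimizer off that set lands at a nearby point which is typically not a critical point, so your generic-case Lagrange analysis gives no information about it. The paper's fix is short and bypasses Lagrange multipliers. Fix any pair $i,j$ with $s_i\neq s_j$, set $u=s_i+s_j$, and write $b_m=P(Y=m)$ for the sum $Y$ over the remaining $d-2$ coordinates; then along the segment $s_i+s_j=u$ one has
\[
P\Bigl(\textstyle\sum_k X_k=r\Bigr)\;=\;s_i(u-s_i)\,(b_{r-2}-2b_{r-1}+b_r)\;+\;\text{constant},
\]
an affine function of the product $s_is_j$. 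If the slope is negative, replacing $(s_i,s_j)$ by $(u/2,u/2)$ strictly decreases the value (your non-degenerate branch). If the slope is \emph{non-negative} --- and this absorbs the degenerate case --- pushing one coordinate to $0$ or $1$ (whichever keeps the other in $[0,1]$) does not increase the value and lands on $\partial D_{d,r}$, already handled by induction. Compactness then forces the minimum to be $C_{d,r}$, attained at the symmetric point.
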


We remark that this lemma for $r=1$ was the main ingredient of the proof of Gurvits for Theorem~\ref{matching}.

We will derive Lemma~\ref{main lemma} from the following statement.

\begin{Lemma} \label{main lemma2}
Let $p(z)=\sum_{k=0}^da_kz^k$ be a real-rooted polynomial with non-negative coefficients. Suppose that $p(1)=1$ and $p'(1)=r$ is an integer. Then
$$a_r\geq \binom{d}{r}\bigg(\frac{r}{d}\bigg)^r\bigg(\frac{d-r}{d}\bigg)^{d-r}.$$
\end{Lemma}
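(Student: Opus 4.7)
The plan is to use real-rootedness to factor $p$ into Bernoulli-type linear factors, identify $a_r$ as a Poisson binomial probability, and then prove that among Poisson binomials with integer mean $r$, the ordinary binomial distribution minimizes the mass at $r$.

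First I would use that $p$ is real-rooted with non-negative coefficients and $p(1)=1$ to factor
\[
  p(z)=\prod_{i=1}^d(\mu_i z+(1-\mu_i)),\qquad \mu_i\in[0,1],
\]
from which $p'(1)=r$ becomes $\sum_i\mu_i=r$. Expanding gives
\[
  a_r=\sum_{\substack{S\subseteq\{1,\dots,d\}\\|S|=r}}\prod_{i\in S}\mu_i\prod_{i\notin S}(1-\mu_i)=\P(X=r),
\]
where $X=X_1+\cdots+X_d$ is the Poisson binomial with $X_i\sim\mathrm{Bernoulli}(\mu_i)$ independent and $\E[X]=r$. The lemma thus reduces to the inequality $\P(X=r)\ge\binom{d}{r}(r/d)^r((d-r)/d)^{d-r}$. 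Note that direct AM-GM on the $\binom{d}{r}$ summands yields only $\binom{d}{r}\prod_i\mu_i^{r/d}(1-\mu_i)^{(d-r)/d}$, which is actually $\le$ the target (by concavity of $\log$), so a finer argument is required.

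I would prove the reduced inequality by induction on~$d$, aiming to show that $F(\mu):=\P(X=r)$ attains its minimum on $\{\mu\in[0,1]^d:\sum_i\mu_i=r\}$ at the symmetric point $\mu_i=r/d$. The base case $d=1$ is immediate. For the inductive step, if the minimizer $\mu^*$ has some $\mu_i^*\in\{0,1\}$, then $p$ factors and we apply the inductive hypothesis at dimension $d-1$, with mean $r$ (if $\mu_i^*=0$) or $r-1$ (if $\mu_i^*=1$). The resulting bound dominates the target because the ratio simplifies to $((s-1)/s)^{s-1}(d/(d-1))^{d-1}$ with $s=d-r$ or $s=r$, and the function $n\mapsto(1+1/(n-1))^{n-1}$ is monotone increasing, so the required inequality reduces to $s\le d$.

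For the interior case $\mu^*\in(0,1)^d$, Lagrange multipliers give $\partial F/\partial\mu_i=\P(X_{-i}=r-1)-\P(X_{-i}=r)=\lambda$ for all $i$; the symmetric point is readily verified to be a critical point with $\lambda=0$ (since $X_{-i}\sim\mathrm{Bin}(d-1,r/d)$ satisfies $\P(X_{-i}=r-1)=\P(X_{-i}=r)$). At an interior minimizer, the second-order condition applied to tangent directions $v=e_i-e_j$ forces $H_{ij}:=\P(X_{-ij}=r-2)-2\P(X_{-ij}=r-1)+\P(X_{-ij}=r)\le 0$ for every pair $i\ne j$; combined with Newton's log-concavity $\P(X_{-ij}=r-1)^2\ge\P(X_{-ij}=r-2)\P(X_{-ij}=r)$ (automatic from real-rootedness of the generating function) and the constraint $\sum\mu_i=r$, the symmetric point emerges as the unique compatible configuration, at which $F$ equals exactly $\binom{d}{r}(r/d)^r((d-r)/d)^{d-r}$. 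The main obstacle is precisely this rigidity step: ruling out non-symmetric interior critical configurations. It amounts to a self-contained proof of the classical Gleser--Hoeffding fact that among Poisson binomials with integer mean $r$ the binomial minimizes the mass at~$r$, which can be carried out either by direct comparison of $F$-values between the symmetric point and a hypothetical two-value configuration $(\eta,\eta')$ with multiplicities $(m,d-m)$ (using the closed form $F=(1+\eta)^{-m}(1+\eta')^{-(d-m)}\sum_j\binom{m}{j}\binom{d-m}{r-j}\eta^j(\eta')^{r-j}$), or by a refined smoothing argument.
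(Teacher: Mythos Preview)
Your setup is exactly the paper's: factor $p(z)=\prod_{i=1}^d(\mu_i z+1-\mu_i)$ with $\sum_i\mu_i=r$, recognize $a_r=f_{d,r}(\mu)$, and argue by induction on $d$ that $f_{d,r}$ is minimized on the simplex at the symmetric point $\mu_i=r/d$. Your boundary treatment (reduce to $d-1$ and compare the two binomial values via monotonicity of $n\mapsto (n/(n-1))^{n-1}$) is equivalent to the paper's, which phrases the same comparison as the fact that $x\mapsto x^k(1-x)^{n-k}$ is maximized at $x=k/n$.

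The gap is the interior step, which you flag yourself. Your second-order computation is correct: along $e_i-e_j$ the Hessian contributes $-2H_{ij}$, so an interior minimizer must have $H_{ij}\le 0$ for every pair. However, Newton log-concavity is a red herring here: $b_{r-1}^2\ge b_{r-2}b_r$ is \emph{weaker} than $2b_{r-1}\ge b_{r-2}+b_r$, so it adds no information once $H_{ij}\le 0$ is known, and the first-order conditions do not by themselves force all $\mu_i$ equal. The ``rigidity'' route, as written, does not close.

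The paper's resolution is precisely the ``refined smoothing'' you allude to, and it is one line once you have your $H_{ij}$. At any non-symmetric point pick $i\ne j$ with $\mu_i\ne\mu_j$, set $u=\mu_i+\mu_j$, and observe (your own calculation) that with $u$ fixed,
\[
f_{d,r}(\mu)=\mu_i(u-\mu_i)\,H_{ij}+\text{const}.
\]
If $H_{ij}<0$, replacing $(\mu_i,\mu_j)$ by $(u/2,u/2)$ strictly decreases $f_{d,r}$, so the point cannot be a minimizer. If $H_{ij}\ge 0$, pushing $(\mu_i,\mu_j)$ to $(0,u)$ or $(u-1,1)$ does not increase $f_{d,r}$, landing on $\partial D_{d,r}$ where the inductive boundary estimate already gives the target bound. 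No Lagrange rigidity is needed; both signs of $H_{ij}$ are used, not just $H_{ij}\le 0$. With this step supplied, your proof and the paper's are the same argument.
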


We remark that Lemma~\ref{main lemma2} is a special case of the following theorem of Hoeffding \cite{Hoe}. Nevertheless to keep our paper self-contained we will give a proof of Lemma~\ref{main lemma2}. The intuitive meaning of Hoeffding's theorem is that among probability distributions coming from real-rooted polynomials and fixed expected value the binomial distribution is the least concentrated around its expected value.

\begin{Th}[Hoeffding \cite{Hoe}] \label{Hoeffding} Let $p(z)=\sum_{k=0}^dp_kz^k$ be a real-rooted polynomial with $p_k\geq 0$, and $p(1)=1$, that is, $\sum_{k=0}^dp_k=1$. Let $s$ be defined by the equation $\sum_{k=0}^dkp_k=ds$.
Suppose that for non-negative integers $b$ and $c$ we have $b\leq ds\leq c$. Then
$$\sum_{k=b}^cp_k\geq \sum_{k=b}^c\binom{d}{k}s^k(1-s)^{d-k}.$$
\end{Th}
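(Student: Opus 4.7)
Because $p$ has non-negative coefficients and is real-rooted, all its roots are real and non-positive, so $p(1)=1$ together with this factorization lets us write $p(z) = \prod_{i=1}^d \bigl(q_i z + (1-q_i)\bigr)$ for some $q_i \in [0,1]$ with $\sum_i q_i = p'(1) = ds$. Hence $p_k = \Pr[S = k]$ for the Poisson--binomial sum $S = X_1 + \cdots + X_d$, where $X_i \sim \mathrm{Bernoulli}(q_i)$ are independent, and the binomial weights on the right-hand side are $\Pr[Y = k]$ for $Y \sim \mathrm{Binomial}(d,s)$, corresponding to the uniform choice $q_i = s$. Hoeffding's inequality thus asserts that the binomial minimizes the probability of a mean-containing interval among all Poisson--binomial laws of prescribed mean $ds$.

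\textbf{Reduction via smoothing.} The plan is to show that $F(q) := \sum_{k=b}^c \Pr[S = k]$ attains its minimum on the simplex $\{q \in [0,1]^d : \sum_i q_i = ds\}$ at the uniform vector $q_i = s$. A natural attempt is a pairwise smoothing $(q_i, q_j) \mapsto (a, a)$ with $a = (q_i + q_j)/2$. Writing $p(z) = (q_i z + 1-q_i)(q_j z + 1-q_j) R(z)$ and using
\[
(q_i z + 1-q_i)(q_j z + 1-q_j) = 1 + (q_i + q_j)(z-1) + q_i q_j (z-1)^2,
\]
only the $(z-1)^2$-coefficient changes under smoothing, and a short computation gives
\[
F(q) - F(q_{\mathrm{sm}}) = (q_i - a)^2 \bigl[(T_{b-1} - T_{b-2}) - (T_c - T_{c-1})\bigr],
\]
where $T_k = [z^k] R(z)$ is the PMF of $W := \sum_{\ell \neq i,j} X_\ell$. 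If the bracket is non-negative for every valid smoothing, then iterating drives $q$ to $(s,\dots,s)$ while decreasing $F$, and the theorem follows.

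\textbf{Main obstacle.} The task reduces to the second-difference inequality $T_{b-1} - T_{b-2} \ge T_c - T_{c-1}$ for all integers $b \le ds \le c$. Log-concavity of $T$ alone (guaranteed by real-rootedness of $R$) is not sufficient; small numerical experiments already show that $F$ need not be Schur-convex along every smoothing direction, so naive iterated smoothing does not monotonically decrease $F$ toward the binomial. The rigorous fix must exploit sharper structure of Poisson--binomial distributions --- for instance the Darroch/Pitman ultra-log-concavity, which pins the mode of $T$ within $1$ of its mean $\mu_T = ds - q_i - q_j$ --- together with a case split on the location of $b-1$ and $c$ relative to that mode: when $b-1 \le \mathrm{mode}(T) \le c-1$ the bracket is manifestly non-negative, and the remaining boundary cases must be handled by comparing the magnitudes of the two differences. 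An alternative is to follow Hoeffding's original variational argument on the simplex, examining stationary points of $F$ and using the boundary behaviour as $q_i \to 0$ or $q_i \to 1$ to rule out non-uniform minimizers. Completing either route is the technical heart of the proof, and is the step I expect to consume the most work.
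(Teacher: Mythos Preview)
The paper does not give its own proof of Hoeffding's theorem; it only states the result and cites \cite{Hoe}. What the paper does prove is the special case $b=c=r$ with $ds=r$ an integer (Lemma~2.9), in Section~2.3. So there is no ``paper's proof'' of the full statement to compare against, but your attempt can be measured against the paper's argument for the special case, which is in the same spirit and reveals the idea you are missing.

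Your setup and smoothing computation are correct, and you are right that the bracket $(T_{b-1}-T_{b-2})-(T_c-T_{c-1})$ can have either sign, so averaging a pair does not always decrease $F$. This is a genuine gap in your proposal, and neither of your suggested fixes (Darroch mode location, or a vague appeal to a variational argument) is carried out. The clean resolution---used in the paper for $b=c$ and essentially Hoeffding's original device---is a dichotomy combined with induction on $d$: if the bracket is positive, smoothing $(q_i,q_j)\mapsto(a,a)$ strictly decreases $F$; if the bracket is non-positive, the opposite extremal move $(q_i,q_j)\mapsto(0,q_i+q_j)$ or $(1,q_i+q_j-1)$ does not increase $F$, and lands on the boundary of the simplex. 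On the boundary one coordinate is $0$ or $1$, the problem drops to $d-1$ Bernoulli summands with the same mean $ds$ (respectively $ds-1$), and the induction hypothesis applies. To close the induction one must then verify the purely binomial inequality comparing $\mathrm{Bin}(d-1,\cdot)$ with $\mathrm{Bin}(d,s)$ on the interval $[b,c]$; for $b=c$ the paper does this via the elementary maximum of $x\mapsto x^k(1-x)^{n-k}$, and the general case requires an analogous but slightly longer computation. The point is: you should not try to force smoothing to work in every direction---use the sign of the bracket to decide whether to average or to push to the boundary, and induct.
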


 It is  easy to see that Lemma~\ref{main lemma2} is a special case of Lemma~\ref{main lemma}, but as the following proof shows they are actually equivalent statements.

\begin{proof}[Proof of Lemma~\ref{main lemma} from Lemma~\ref{main lemma2}.] 
Suppose that $p(z)=\sum_{k=m}^Ma_kz^k$, where $a_m,a_M>0$.
If $r<m$ or $r>M$, then $\inf_{t>0} \frac{p(t)}{t^r}=0$ so the claim is true in this case. If $r=m$, then $\inf_{t>0} \frac{p(t)}{t^r}=a_m$ so the claim is again true. If $r=M$, then  $\inf_{t>0} \frac{p(t)}{t^r}=a_M$ so we are again done. 
Thus we can assume that $m<r<M$.
Observe that $\frac{tp'(t)}{p(t)}$ is monotone increasing, $\frac{tp'(t)}{p(t)}\big|_{t=0}=m$ and $\lim_{t\to \infty} \frac{tp'(t)}{p(t)}=M$. So we can choose $t_r>0$ in such a way that $\frac{t_rp'(t_r)}{p(t_r)}=r$. Let us consider the probability distribution $q_j=\frac{a_jt_r^j}{p(t_r)}$. Then $\sum_k kq_k=r$, and $\sum_{j=0}^dq_jz^j$ is still a real-rooted polynomial.
Next let us apply Lemma~\ref{main lemma2}. Then
$$\frac{a_rt_r^r}{p(t_r)}=q_r\geq \binom{d}{r}\left(\frac{r}{d}\right)^r\left(\frac{d-r}{d}\right)^{d-r}.$$
In other words,
$$a_r\geq \binom{d}{r}\left(\frac{r}{d}\right)^r\left(\frac{d-r}{d}\right)^{d-r}\frac{p(t_r)}{t_r^r}\geq \binom{d}{r}\left(\frac{r}{d}\right)^r\left(\frac{d-r}{d}\right)^{d-r}\inf_{t>0}\frac{p(t)}{t^r}.$$
\end{proof}

Before we prove Lemma~\ref{main lemma2} we show that Lemma~\ref{main lemma} indeed implies Thorem~\ref{capacity preserving}.

\begin{proof}[Proof of Theorem~\ref{capacity preserving}]
For fixed $a_2,\dots ,a_n$ consider the polynomial 
$$g(x)=P(x,a_2,\dots ,a_{n}).$$
Clearly, $\frac{1}{r!}\frac{d^r}{dx^r}g(x)\Big|_{x=0}=Q(a_2,\dots ,a_{n})$. Since $P$ is stable, and we substituted $a_i\in \mathbb{R}$ into it, $g(x)$ is stable. In other words, it is real-rooted. Thus we can use Theorem~\ref{main lemma}:
$$a_r \geq  \binom{d}{r}\bigg(\frac{r}{d}\bigg)^r\bigg(\frac{d-r}{d}\Bigg)^{d-r}\inf_{x>0}\frac{g(x)}{x^r}.$$
Hence we have
\begin{align*}
\frac{Q(a_2,\dots ,a_{n})}{\prod_{i=2}^{n}a_i^{\alpha_i}}=\frac{\frac{1}{r!}\frac{d^r}{dx^r}g(x)\Big|_{x=0}}{\prod_{i=2}^{n}a_i^{\alpha_i}}&\geq \binom{d}{r}\bigg(\frac{r}{d}\bigg)^r\bigg(\frac{d-r}{d}\bigg)^{d-r}\frac{1}{\prod_{i=2}^{n}a_i^{\alpha_i}}\inf_{x>0}\frac{g(x)}{x^r}\\
                            &= \binom{d}{r}\bigg(\frac{r}{d}\bigg)^r\bigg(\frac{d-r}{d}\bigg)^{d-r}\inf_{x>0} \frac{P(x,a_2,\dots ,a_{n})}{x^r\cdot \prod_{i=2}^{n} a_i^{\alpha_i}}\\
							 &\geq \binom{d}{r}\bigg(\frac{r}{d}\bigg)^r\bigg(\frac{d-r}{d}\bigg)^{d-r}\mathrm{cap}_{\underline{\alpha}}(P).
\end{align*}
Taking infimum on the left side we get that
$\mathrm{cap}_{\underline{\alpha}'}(Q)\geq \binom{d}{r}\big(\frac{r}{d}\big)^r\big(\frac{d-r}{d}\big)^{d-r}\mathrm{cap}_{\underline{\alpha}}(P)$.
\end{proof}

\subsection{Proof of Lemma~\ref{main lemma2}}
In this section we prove Lemma~\ref{main lemma2}. The condition on the non-negativity of the coefficients and $p(1)=1$ implies that $p(z)$ can be written as follows:
$$p(z)=\prod_{i=1}^d(1-\alpha_i+\alpha_iz),$$
where $0\leq \alpha_i\leq 1$. Indeed, as all coefficients are non-negative, there can be no positive roots, thus using $p(1)=1$ the polynomial can be rewritten in the following way:
$$p(z)=a_n\prod_{i=1}^d(z+\rho_i)=\prod_{i=1}^d\left(\frac{z+\rho_i}{1+\rho_i}\right)=\prod_{i=1}^d(1-\alpha_i+\alpha_iz),$$
where $\alpha_i=\frac{1}{1+\rho_i}$.
Note that
$$r=p'(1)=\sum_{i=1}^d \alpha_i\prod_{j\not=i}(\alpha_j+1-\alpha_j)=\sum_{i=1}^d \alpha_i.$$
Consider the domain 
$$D_{d,r}=\left\{(\alpha_1,\dots ,\alpha_d)\in \mathbb{R}^d\ \Big|\ 0\leq \alpha_i\leq 1\ (i=1,\dots ,d),\ \sum_{i=1}^d\alpha_i=r\right\}.$$
Clearly, the coefficient $a_r$ of $z^r$ in $p(z)$ can be expressed as
$$a_{r}=\sum_{\substack{K\subset[n]\\ |K|=r}}\prod_{j\in K}\alpha_j\prod_{j\not\in K}(1-\alpha_j).$$
So let us introduce the function
$$f_{d,r}(x_1,\ldots,x_d)=\displaystyle\sum_{\substack{K\subset[n]\\ |K|=r}}\prod_{j\in K}x_j\prod_{j\not\in K}(1-x_j).$$
Clearly, the statement of Lemma~\ref{main lemma2} is equivalent with
$$\min_{\underline{x}\in D_{d,r}}f_{d,r}(\underline{x})=f_{d,r}\left(\frac{r}{d},\dots ,\frac{r}{d}\right)=\binom{d}{r}\bigg(\frac{r}{d}\bigg)^r\bigg(\frac{d-r}{d}\bigg)^{d-r}.$$
We will prove this statement by induction on $d$. The case $d=1$ and $r=0$ or $r=1$ is trivial. In general, the case $r=0$ or $r=d$ is trivial since $D_{d,r}$ consists of only one point in this case. So we can always assume that $0<r<d$. First we prove that the statement is true for the boundary $\partial D_{d,r}$. Then we will prove that  if $\underline{x}\neq \left(\frac{r}{d},\dots ,\frac{r}{d}\right)$, then we either have a point $\underline{y}\in \partial D_{d,r}$ for which $f_{d,r}(\underline{x})\geq f_{d,r}(\underline{y})$ or there exists an $\underline{x}'\in D_{d,r}$ such that $f_{d,r}(\underline{x})> f_{d,r}(\underline{x}')$.
 The compactness of $D_{d,r}$ then implies that $\min_{\underline{x}\in D_{d,r}}f_{d,r}(\underline{x})=f_{d,r}\left(\frac{r}{d},\dots ,\frac{r}{d}\right)$.

So let us first prove that for $\underline{x}\in \partial D_{d,r}$ we have $f_{d,r}(\underline{x})\geq f_{d,r}\left(\frac{r}{d},\dots ,\frac{r}{d}\right)$.
Clearly, if $\underline{x}\in \partial D_{d,r}$ then one of its coordinates is $0$ or $1$. If we delete this coordinate, then the obtained vector $\underline{x}'$ is in $D_{d-1,r}$ in the first case, and in $D_{d-1,r-1}$ in the second case. Furthermore, $f_{d,r}(\underline{x})=f_{d-1,r}(\underline{x}')$ in the first case, and $f_{d,r}(\underline{x})=f_{d-1,r-1}(\underline{x}')$ in the second case. By induction we know that we have
$$f_{d-1,r}(\underline{x}')\geq \binom{d-1}{r}\bigg(\frac{r}{d-1}\bigg)^r\bigg(\frac{d-1-r}{d-1}\bigg)^{d-1-r}$$
for $\underline{x}'\in D_{d-1,r}$, and 
$$f_{d-1,r-1}(\underline{x}')\geq \binom{d-1}{r-1}\bigg(\frac{r-1}{d-1}\bigg)^{r-1}\bigg(\frac{d-r}{d-1}\bigg)^{d-r}$$
for $\underline{x}'\in D_{d-1,r-1}$. Let us introduce the function $\ell_{n,k}=x^k(1-x)^{n-k}$. It is easy to see that it takes its maximum at the value $x=k/n$ in the interval $[0,1]$ as its derivative is $(k-nx)x^{k-1}(1-x)^{n-k-1}$. Hence
\begin{align*}
\binom{d}{r}\bigg(\frac{r}{d}\bigg)^r\bigg(\frac{d-r}{d}\bigg)^{d-r}&=\binom{d-1}{r}\bigg(\frac{r}{d}\bigg)^r\bigg(\frac{d-r}{d}\bigg)^{d-r-1}\\
&=\binom{d-1}{r}\ell_{d-1,r}\left(\frac{r}{d}\right)\\
&<\binom{d-1}{r}\ell_{d-1,r}\left(\frac{r}{d-1}\right)\\
&=\binom{d-1}{r}\bigg(\frac{r}{d-1}\bigg)^r\bigg(\frac{d-1-r}{d-1}\bigg)^{d-1-r}
\end{align*}
and
\begin{align*}
\binom{d}{r}\bigg(\frac{r}{d}\bigg)^r\bigg(\frac{d-r}{d}\bigg)^{d-r}&=\binom{d-1}{r-1}\bigg(\frac{r}{d}\bigg)^{r-1}\bigg(\frac{d-r}{d}\bigg)^{d-r}\\
&=\binom{d-1}{r-1}\ell_{d-1,r-1}\left(\frac{r}{d}\right)\\
&<\binom{d-1}{r-1}\ell_{d-1,r-1}\left(\frac{r-1}{d-1}\right)\\
&=\binom{d-1}{r-1}\bigg(\frac{r-1}{d-1}\bigg)^{r-1}\bigg(\frac{d-r}{d-1}\bigg)^{d-r}.
\end{align*}
Hence for $\underline{x}\in \partial D_{d,r}$ we have $f_{d,r}(\underline{x})\geq f_{d,r}\left(\frac{r}{d},\dots ,\frac{r}{d}\right)$. 

Next we show that if $\underline{x}\neq \left(\frac{r}{d},\dots ,\frac{r}{d}\right)$, then either 
we have a point $\underline{y}\in \partial D_{d,r}$ for which $f_{d,r}(\underline{x})\geq f_{d,r}(\underline{y})$ or
there exists an $\underline{x}'\in D_{d,r}$ such that $f_{d,r}(\underline{x})> f_{d,r}(\underline{x}')$. Since $\underline{x}\neq \left(\frac{r}{d},\dots ,\frac{r}{d}\right)$ there exists $i$ and $j$ such that $x_i\neq x_j$. Let $x_i+x_j=u$ and 
$$\prod_{k\neq i,j}(1-x_k+x_kz)=\sum_{m=0}^{d-2}b_mz^m.$$
Then $f_{d,r}(\underline{x})$ is the coefficient of $z^r$ in the polynomial 
$$\left(\sum_{m=0}^{d-2}b_mz^m\right)(1-x_i+zx_i)(1-(u-x_i)+z(u-x_i)).$$
A little computation shows that
$$f_{d,r}(\underline{x})=x_i(u-x_i)(b_{r-2}-2b_{r-1}+b_r)+(rb_{r-1}+(1-r)b_r).$$
If $b_{r-2}-2b_{r-1}+b_r<0$, then we get a strictly smaller value for $f_{d,r}(\underline{x})$ if we replace $(x_i,x_j)$ with $(u/2,u/2)$. If $b_{r-2}-2b_{r-1}+b_r\geq 0$, then we can replace $(x_i,x_j)$ with $(0,u)$ or $(1,u-1)$ depending on $u\in [0,1]$ or $[1,2]$ yielding a boundary point $\underline{y}$ for which $f_{d,r}(\underline{x})\geq f_{d,r}(\underline{y})$. This completes the proof.

\section{Capacity of the polynomial $P_G$}

In this section we prove Theorems~\ref{matching} and \ref{orientation} by computing the capacity of $P_G(\underline{x})=\prod_{(u,v)\in E}(x_u+x_v)$ with respect to various vectors $\underline{\alpha}$.

\begin{Lemma} \label{lemma-matching} Let $G=(A,B,E)$ be a $d$--regular bipartite graph on $2n$ vertices. Let $\underline{\alpha}$ be the vector that takes value $1$ at a vertex $u\in A$, and value $d-1$ at a vertex $v\in B$. Then
$$\capp_{\underline{\alpha}}(P_G) = \frac{d^{nd}}{(d-1)^{n(d-1)}}.$$
\end{Lemma}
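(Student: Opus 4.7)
The plan is to establish the claimed equality by matching upper and lower bounds, both obtained by elementary AM-GM / calculus arguments. For the upper bound I would simply evaluate the ratio $P_G(\underline{x})/\prod_u x_u^{\alpha_u}$ at a convenient point. A natural candidate is the symmetric one-parameter point $x_u = c$ for every $u \in A$ and $x_v = 1$ for every $v \in B$, where $c > 0$. Because $G$ is $d$-regular bipartite on $2n$ vertices, the numerator collapses to $(c+1)^{nd}$ and the denominator to $c^n$, reducing the problem to minimizing the one-variable function $(c+1)^{nd}/c^n$ over $c>0$. A short differentiation picks out $c = 1/(d-1)$, and direct substitution should give exactly $d^{nd}/(d-1)^{n(d-1)}$.

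For the matching lower bound, the plan is to apply weighted AM-GM edge by edge. For positive reals $a,b$ and $p\in(0,1)$ one has
$$a + b \geq p^{-p}(1-p)^{-(1-p)} a^p b^{1-p}.$$
I would use this on every edge $(u,v)$ with $u\in A$, $v\in B$, taking weight $p = 1/d$ on $x_u$ and weight $(d-1)/d$ on $x_v$. This choice is forced, not guessed: when the $nd$ edgewise inequalities are multiplied together, each $A$-vertex should accumulate total exponent $1$ and each $B$-vertex total exponent $d-1$, and $d$-regularity together with bipartiteness turns $d$ edge-contributions of $x_u^{1/d}$ into $x_u$ and $d$ edge-contributions of $x_v^{(d-1)/d}$ into $x_v^{d-1}$.

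Taking the product of the edgewise bounds, the monomial on the right equals $\prod_{u\in A} x_u \prod_{v\in B} x_v^{d-1} = \prod_u x_u^{\alpha_u}$, while the constant becomes $\bigl[d^{1/d}\,\bigl(\tfrac{d}{d-1}\bigr)^{(d-1)/d}\bigr]^{nd}$; routine simplification of the exponent $nd\cdot\tfrac{1}{d}+nd\cdot\tfrac{d-1}{d}$ for the $d$ factor and $-nd\cdot\tfrac{d-1}{d}$ for the $(d-1)$ factor gives $d^{nd}/(d-1)^{n(d-1)}$. Dividing yields the pointwise lower bound on $P_G(\underline{x})/\prod_u x_u^{\alpha_u}$ for every positive $\underline{x}$, matching the value attained in the first step, so the infimum is in fact attained and equals the claimed quantity.

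The argument is essentially mechanical: no real obstacle arises. The one place to be careful is the bookkeeping in the lower bound, making sure the weights $1/d$ and $(d-1)/d$ are consistent with $d$-regularity on both sides of the bipartition so that the accumulated exponents at each vertex hit $\alpha_u$ exactly; any other choice of weights would either fail to reproduce $\underline{\alpha}$ or fail to match the constant from the upper bound.
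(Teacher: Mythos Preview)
Your proof is correct and is essentially the same as the paper's: both establish the lower bound via the edgewise weighted AM--GM inequality with weights $1/d$ and $(d-1)/d$, and both exhibit equality at a point where the $A$-variables and $B$-variables are in ratio $1:(d-1)$. The only cosmetic difference is that the paper plugs in $x_u=1$, $x_v=d-1$ directly, whereas you optimize over the one-parameter family $x_u=c$, $x_v=1$ to arrive at the equivalent point $c=1/(d-1)$ (the ratio is homogeneous of degree~$0$, so these are the same choice up to scaling).
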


\begin{proof} For sake of convenience let us denote the variables by $x_u$ if $u\in A$, and $y_v$ if $v\in B$. Then
$$\capp_{\underline{\alpha}} (P_G)=\inf_{x_u,y_v>0 \atop u\in A, v\in B} \frac{ \prod_{(u,v)\in E(G)}(x_u+y_v)}{\prod_{u\in A}x_u \cdot  \prod_{v\in B}y_v^{d-1}}=\inf_{x_u,y_v>0 \atop u\in A, v\in B}\prod_{(u,v)\in E(G)}\frac{x_u+y_v}{x_u^{1/d}y_v^{(d-1)/d}}.$$
Note that
$$x_u+y_v=\frac{1}{d}(dx_u)+\frac{d-1}{d}\left(\frac{dy_v}{d-1}\right)\geq (dx_u)^{1/d}\left(\frac{dy_v}{d-1}\right)^{(d-1)/d}$$
by weighted arithmetic-geometric mean inequality. In other words,
$$\frac{x_u+y_v}{x_u^{1/d}y_v^{(d-1)/d}}\geq \frac{d}{(d-1)^{(d-1)/d}}.$$
Hence
$$\capp_{\underline{\alpha}} (P_G)\geq \left(\frac{d}{(d-1)^{(d-1)/d}}\right)^{nd}=\frac{d^{nd}}{(d-1)^{n(d-1)}}.$$
Observe that if $x_u=1$ and $y_v=d-1$ for all $u\in A$ and $v\in B$ then this bound is sharp.
\end{proof}

\begin{proof}[Proof of Theorem~\ref{matching}]
As before let  $\underline{\alpha}$ be the vector that takes value $1$ at a vertex $u\in A$, and values $d-1$ at a vertex $v\in B$.
By Theorem~\ref{general} we have
$$\prm(G)\geq  \prod_{v\in A}\binom{d}{1}\bigg(\frac{1}{d}\bigg)^{1}\bigg(\frac{d-1}{d}\bigg)^{d-1}\cdot \prod_{v\in B}\binom{d}{d-1}\bigg(\frac{d-1}{d}\bigg)^{d-1}\bigg(\frac{1}{d}\bigg)^{1}\cdot \mathrm{cap}_{\underline{\alpha}}(P_G).$$
Then by Lemma~\ref{lemma-matching} we have
$$\prm(G)\geq \left(d\bigg(\frac{d-1}{d}\bigg)^{d-1}\right)^{2n}\frac{d^{nd}}{(d-1)^{n(d-1)}}=\bigg( \frac{(d-1)^{d-1}}{d^{d-2}} \bigg)^{n}.$$
\end{proof}

\begin{Lemma} \label{lemma-orientation} Let $\underline{\alpha}=(d_1/2,\dots ,d_n/2)$, where $d_j$ is the degree of the vertex $j$. Then
$\capp_{\underline{\alpha}} (P_G) = 2^{e(G)}$,
where  $e(G)$ is the number of edges. 
\end{Lemma}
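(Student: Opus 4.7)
The plan is to reduce the capacity computation to an edge-by-edge application of the AM--GM inequality. First I would observe that the exponent vector $\underline{\alpha} = (d_1/2,\dots,d_n/2)$ is exactly half of the degree sequence, so the ratio $P_G(\underline{x})/\prod_u x_u^{d_u/2}$ is homogeneous of degree $0$ (both numerator and denominator have total degree $\sum_v d_v/2 \cdot 2 = 2e(G)$ and $e(G)$ respectively when counted correctly; more precisely the total degree of $P_G$ equals $e(G) \cdot 1$ in each variable summed, so the denominator matches).

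Next I would rewrite the denominator as a product over edges. Each vertex $u$ occurs in exactly $d_u$ edges, so
\[
\prod_{u\in V(G)} x_u^{d_u/2} \;=\; \prod_{(u,v)\in E(G)} (x_u x_v)^{1/2}.
\]
Consequently
\[
\frac{P_G(\underline{x})}{\prod_{u} x_u^{d_u/2}} \;=\; \prod_{(u,v)\in E(G)} \frac{x_u + x_v}{\sqrt{x_u x_v}}.
\]

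Now I would apply the arithmetic--geometric mean inequality to each edge: for $x_u, x_v > 0$,
\[
x_u + x_v \;\geq\; 2\sqrt{x_u x_v},
\]
with equality iff $x_u = x_v$. Multiplying over all edges gives the lower bound
\[
\capp_{\underline{\alpha}}(P_G) \;\geq\; 2^{e(G)}.
\]
Finally, setting $x_u = 1$ for every $u \in V(G)$ makes every edge factor equal to $2$, so the bound is attained; therefore $\capp_{\underline{\alpha}}(P_G) = 2^{e(G)}$.

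There is essentially no obstacle here: the only thing to be careful about is bookkeeping of the exponents to see that the denominator factors cleanly edge-by-edge, after which the inequality is an immediate AM--GM application with an obvious minimizer.
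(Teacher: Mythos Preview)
Your proof is correct and essentially identical to the paper's: both factor the denominator over edges via $\prod_{u} x_u^{d_u/2} = \prod_{(u,v)\in E} \sqrt{x_u x_v}$, apply AM--GM $x_u+x_v\ge 2\sqrt{x_u x_v}$ on each edge to obtain the lower bound $2^{e(G)}$, and then evaluate at $x_u\equiv 1$ to see that the bound is attained. The only cosmetic difference is the order of presentation---the paper checks the upper bound first and performs the edge factorization implicitly in the final displayed line.
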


\begin{proof}
First we will prove that $\capp_{\underline{\alpha}}(P_G) \leq 2^{e(G)}$.
Indeed, if we substitute $\underline{1}$ as $\underline{x}$, then we get
$$\frac{P_G(1,1, \dots 1)}{\prod_{v\in V(G)}1^{d_v/2}}=\prod_{(i,j)\in E(G)}(1+1)=2^{e(G)}.$$
Next we will prove the other direction: $\capp _{\underline{\alpha}}(P_G) \geq 2^{e(G)}$.
Using that $x_i+x_j\geq 2\sqrt{x_ix_j}$ we get that
$$\frac{\prod_{(u,v)\in E}(x_u+x_v)}{\prod_{v\in V(G)}x_v^{d_v/2}} \geq \frac{\prod_{(i,j)\in E(G)} 2\sqrt{x_ix_j}}{\prod_{v\in V(G)}x_v^{d_v/2}}=2^{e(G)}.$$
Thus $\capp_{\underline{\alpha}} (P_G) = 2^{e(G)}$.
\end{proof}

\begin{proof}[Proof of Theorem~\ref{orientation}]
As before let $\underline{\alpha}=(d_1/2,\dots ,d_n/2)$, where $d_j$ is the degree of the vertex $j$. 
By Theorem~\ref{general} we have
$$\e(G)\geq  \prod_{v\in V}\binom{d_v}{d_v/2}\bigg(\frac{d_v/2}{d_v}\bigg)^{d_v/2}\bigg(\frac{d_v/2}{d_v}\bigg)^{d_v/2}\cdot \mathrm{cap}_{\underline{\alpha}}(P_G).$$
Then by Lemma~\ref{lemma-orientation} we have
$$\e(G)\geq \prod_{v\in V}\frac{\binom{d_v}{d_v/2}}{2^{d_v}}\cdot 2^{e(G)}=\prod_{v\in V}\frac{\binom{d_v}{d_v/2}}{2^{d_v/2}}.$$
\end{proof}

\begin{Rem}
For $d$--regular graphs Las Vergnas \cite{Ver1} improved Theorem~\ref{orientation} as follows:
$$\e(G)\geq \frac{2^d}{\binom{d}{d/2}}\left(\frac{\binom{d}{d/2}}{2^{d/2}}\right)^n.$$
(At Proposition 5.3 of \cite{Ver1} there is a typo as it is pointed out in another paper of Las Vergnas  in the footnote of the first page of \cite{Ver2}.) This strengthening can be obtained by our method too: all we have to note that we only need to apply Theorem~\ref{capacity preserving} to $n-1$ variables corresponding to vertices of $P_G(\underline{x})$: at the very end we should get a polynomial of the form $cx_n^{d_n/2}$ since throughout the process we get homogeneous polynomials. For this univariate polynomial there is no need to apply Theorem~\ref{capacity preserving} once more, so we get a $2^d/\binom{d}{d/2}$ improvement. This argument applies to non-regular graphs too thereby saving a factor $2^{\Delta}/\binom{\Delta}{\Delta/2}$, where $\Delta$ is the largest degree.

\end{Rem}

\section{Beyond this paper}

In this section we collected some pointers to the literature.

Stable polynomials have a huge literature.
If someone is interested in a  comprehensive introduction to the this theory, then the paper of Choe, Oxley, Sokal and Wagner \cite{COSW} or Wagner's survey \cite{Wag} might be a good choice. Another excellent survey of the area is the paper of Vishnoi \cite{Vis}.  For capacity preserver operations the paper of  Gurvits and Leake \cite{LeGu} gives a treatment that is both very general and very readable. Statements on capacity often boils down to some statement about coefficients of univariate real-rooted polynomials often with a probabilistic flavour like Hoeffding's theorem \cite{Hoe}. A good source of such inequalities and results is Pitman's survey \cite{Pit}. The coefficient lemma, Theorem~\ref{coefficient theorem}, already appeared in the paper \cite{Gur3}. Interestingly this paper also considers various versions of the polynomial $P_G(\underline{x})=\prod_{(u,v)\in E(G)}(x_u+x_v)$, but never exactly this form. The coefficient lemma could have been easily derived from the work of  Gurvits and Leake \cite{LeGu} too.

There are many different proofs and generalizations of Theorem~\ref{matching}. The original proof of Schrijver \cite{Sch2} is elementary, but involved. The first proof based on stable polynomials and capacity is due to Gurvits \cite{Gur1}, his proof is simplified in the paper of Laurent and Schrijver \cite{LaSc}. Another proof based on the theory of graph covers is given by Csikv\'ari \cite{Csik}. (The relationship between the theory of graph covers and the theory of stable polynomials is not yet well-understood.) Theorem~\ref{matching} has a very natural generalization for permanents of non-negative matrices. This generalization was derived by Gurvits \cite{Gur2} from the original paper of Schrijver \cite{Sch2}. Subsequently, Anari and Oveis-Gharan \cite{A-OG} and Straszak and Vishnoi \cite{StVi} gave a proof that only relies on the theory of stable polynomials. Another possible generalization considers counting matchings of fixed size in bipartite graphs instead of perfect matchings. This question was treated in the papers Csikv\'ari \cite{Csik}, Lelarge \cite{Lela} and Gurvits and Leake \cite{LeGu} (this last one uses only stable polynomials). 

Concerning Eulerian orientations, Theorem~\ref{orientation} has many different proofs either. The original proof of Schrijver is very elegant and simple. Las Vergnas \cite{Ver1}  gave another proof building on the theory of Martin's polynomial that gives a slightly stronger result. Borb\'enyi and Csikv\'ari \cite{BoCs} gave a proof using gauge transformation. The proof presented here is the first one using stable polynomials, but we remark that this result could have been easliy deduced from the paper of Straszak and Vishnoi \cite{StVi} too that uses stable polynomials. 

Both Theorems~\ref{matching} and \ref{orientation} can be interpreted as a correlation inequality. If we divide by $2^{e(G)}=\prod_{v\in V(G)} 2^{d_v/2}$ in Theorem~\ref{orientation}, then the the right hand side of the inequality
$$\frac{\e(G)}{2^{e(G)}}\geq \prod_{v\in V(G)}\frac{\binom{d_v}{d_v/2}}{2^{d_v}}$$
is the probability that a random orientation is Eulerian, while on the left hand side the term $\frac{\binom{d_v}{d_v/2}}{2^{d_v}}$ is the probability that a random orientation is balanced at vertex $v$. So this inequality can be interpreted as a positive correlation inequality. Similarly, if $G=(A,B,E)$ is a $d$--regular bipartite graph, then we can consider the probability space where for each vertex $u$ in $A$ we pick exactly one of the edges incident to $u$ uniformly at random. This way we picked $n$ edges.  For a vertex $v\in B$ let $E_v$ be the event that we picked exactly one of the edges incident to $v$. Then $\mathbb{P}(\cap_{v\in B}E_v)=\frac{\prm(G)}{d^n}$ while $\mathbb{P}(E_v)=d\cdot \frac{1}{d}\left(1-\frac{1}{d}\right)^{d-1}=\left(\frac{d-1}{d}\right)^{d-1}$. Hence Theorem~\ref{matching} is equivalent with $\mathbb{P}(\cap_{v\in B}E_v)\geq \prod_{v\in B}\mathbb{P}(E_v)$. It seems that many results on capacity are indeed fuelled by positive correlation inequalities, often in a very disguised way. This connection is the most explicit in the paper \cite{StVi} where the authors introduce the iterated positive correlation property and connect it with stable polynomials. Surprisingly it is also possible to build out a theory of negative correlation based on stable polynomials, for details see the paper  \cite{Pem}. 

One might wonder whether there is a deeper connection between Theorems~\ref{matching} and \ref{orientation}. It turns out that both theorems fall into a pattern that is about the so-called Bethe approximation. Hans Bethe  was a Nobel-prize laurate physicist. Among many other achievements he introduced the concept that is now known as Bethe--approximation. Originally this was a highly heuristic concept that approximates well quantities coming from counting objects with local constraints like perfect matchings and Eulerian orientations. For a long time it was overlooked by the mathematics community, then Bethe approximation showed up in two different lines of research. In the work of Dembo, Montanari and their coauthors \cite{DeMa,DeMa2,DMS,DMSS} about graph limit theory of sparse graphs, an appropriate version of Bethe approximation played the role of the limit value of certain graph parameters. Another line of research emerged from the work of Gurvits establishing inequalities between a graph parameter and its Bethe approximation. In many cases Bethe approxiation turns out to be a lower bound for the corresponding graph parameter. Theorem~\ref{matching} and \ref{orientation} belong to this line of research. This area almost exclusively relies on stable polynomials \cite{A-OG,StVi} and graph covers \cite{Csik,Ruo,Lela,Von}.
\bigskip

\noindent \textbf{Acknowledgment.} The first author thanks Jonathan Leake for the discussions on the topic of this paper.
The authors are very grateful to the anonymous referee for his/her suggestions that greatly improved the paper.

\end{document}